\newcolumntype{d}[1]{D{.}{.}{#1}}
\newcommand{\Prr}{\mathbf{P}}
\newcommand{\trp}{^{t}}
\newtheorem{theo}{Theorem}[section]
\newtheorem{prop}{Proposition}[section]
\begin{document}
\begin{frontmatter}

\title{Power-enhanced multiple decision functions controlling
family-wise error and false~discovery~rates}
\runtitle{Improved multiple testing procedures}

\begin{aug}
\author[A]{\fnms{Edsel A.} \snm{Pe\~na}\corref{}\thanksref{t1}\ead[label=e1]{pena@stat.sc.edu}
\ead[label=u1,url]{http://www.stat.sc.edu/\textasciitilde pena}},
\author[B]{\fnms{Joshua D.} \snm{Habiger}\ead[label=e2]{jhabige@okstate.edu}} and
\author[A]{\fnms{Wensong} \snm{Wu}\ead[label=e3]{wu26@mailbox.sc.edu}}
\runauthor{E. Pe\~na, J. D. Habiger, W. Wu}
\affiliation{University of South Carolina, Columbia,
Oklahoma State University\break and University of South Carolina, Columbia}
\address[A]{E. Pe\~na\\
W. Wu\\
Department of Statistics\\
University of South Carolina\\
Columbia, South Carolina 29208\\
USA\\
\printead{e1}\\
\phantom{E-mail: }\printead*{e3}\\
\printead{u1}} 
\address[B]{J. D. Habiger\\
Department of Statistics\\
Oklahoma State University\\
301-G MSCS Bldg\\
Stillwater, Oklahoma 74078\\
USA\\
\printead{e2}}
\end{aug}

\thankstext{t1}{Supported by NSF Grant DMS-08-05809,
NIH Grant RR17698 and
EPA Grant RD-83241902-0 to University
of Arizona with
subaward number Y481344 to the University of South Carolina.}

\received{\smonth{3} \syear{2010}}
\revised{\smonth{7} \syear{2010}}

%
\begin{abstract}
Improved procedures, in terms of smaller missed discovery rates (MDR),
for performing multiple hypotheses testing
with weak and strong control of the family-wise error rate (FWER)
or the false discovery rate (FDR) are developed and studied. The
improvement over existing procedures such as the {\v{S}}id{\'a}k
procedure for
FWER control and the Benjamini--Hochberg (BH) procedure for FDR control
is achieved by exploiting possible differences in the powers of the
individual tests. Results signal the need to take into account the
powers of the individual tests and to have multiple hypotheses decision
functions which are not limited to simply using the individual
$p$-values, as is the case, for example, with the {\v{S}}id{\'a}k,
Bonferroni, or
BH procedures. They also enhance understanding of the role of the
powers of individual tests, or more precisely the receiver operating
characteristic (ROC) functions of decision processes, in the search for
better multiple hypotheses testing procedures. A decision-theoretic
framework is utilized, and through auxiliary randomizers the procedures
could be used with discrete or mixed-type data or with rank-based
nonparametric tests. This is in contrast to existing $p$-value based
procedures whose theoretical validity is contingent on each of these
$p$-value statistics being stochastically equal to or greater than a
standard uniform variable under the null hypothesis. Proposed
procedures are relevant in the analysis of high-dimensional ``large
$M$, small $n$'' data sets arising in the natural, physical, medical,
economic and social sciences, whose generation and creation is
accelerated by advances in high-throughput technology, notably, but not
limited to, microarray technology.
\end{abstract}

%
\begin{keyword}[class=AMS]
\kwd[Primary ]{62F03}
\kwd[; secondary ]{62J15}.
\end{keyword}
\begin{keyword}
\kwd{Benjamini--Hochberg procedure}
\kwd{Bonferroni procedure}
\kwd{decision pro\-cess}
\kwd{false discovery rate (FDR)}
\kwd{family wise error rate (FWER)}
\kwd{Lagrangian optimization}
\kwd{Neyman--Pearson most powerful test}
\kwd{microarray analysis}
\kwd{reverse martingale}
\kwd{missed discovery rate (MDR)}
\kwd{multiple decision function and process}
\kwd{multiple hypotheses testing}
\kwd{optional sampling theorem}
\kwd{power function}
\kwd{randomized $p$-values}
\kwd{generalized multiple decision $p$-values}
\kwd{ROC function}
\kwd{{\v{S}}id{\'a}k procedure}.
\end{keyword}

\end{frontmatter}

\section{Introduction and motivation}
\label{section-Introduction}

The advent of modern technology, epitomized by the microarray, has led
to the generation of very high-dimensional data pertaining to
characteristics of a large number, $M$, of attributes, hereon called
genes, associated with usually a small number, $n$, of units or
subjects. Several such data sets are, for example, described in
\cite{Efr08}, and these are the inputs to so-called parallel inference
problems. The most common form of inference is multiple hypotheses
testing, wherein for the $m$th gene there are two competing hypotheses,
a null hypothesis $H_{m0}$ and an alternative hypothesis $H_{m1}$, for
which a decision is to be made based on the data. In such multiple
decision-making, there is a need to be cognizant and cautious of the
\textit{Hyde}-ian nature of multiplicity, while also exploiting the
\textit{Jekyll}-ian potentials of multiplicity \cite{Stevenson03}.
Furthermore, this entails a tenuous balance between two competing
desires: controlling the rate of rejection of correct null hypotheses,
while at the same time maintaining the rate of discovery of correct
alternative hypotheses.

As in single-pair hypothesis testing, a type I error occurs
when a correct null hypothesis is rejected, while a type II
error occurs when a false null hypothesis is not rejected. Several
type I errors have been proposed in multiple testing; see
\cite{DudShaBol03} and \cite{DudLaa08}.
Our focus is on the weak family wise error rate
(FWER), the probability of rejecting at least one null
hypothesis when all the nulls are correct; strong FWER, the
probability of rejecting at least one correct null hypothesis; and
false discovery rate (FDR), the expected
proportion of the number of false rejections of nulls
relative to the number of rejections \cite{Sor89,BenHoc95}.
Our type II error rate is the missed discovery rate
(MDR), the expected number of false nonrejections of null hypotheses.
Other type II errors have been discussed in
\cite{DudShaBol03,Sto03,DudGilVan07,Efr07,DudLaa08}.
The usual framework in developing multiple decision functions is to
bound the chosen type~I error rate, and then minimize or make small the
MDR. For example, a procedure controlling weak FWER, under an
independence assumption, is that of {\v{S}}id{\'a}k \cite{Sid67}; while
a conservative one not requiring independence is the Bonferroni
procedure~\cite{Bon37}. For FDR control, the most common procedure is
the BH procedure \cite{BenHoc95}. Control of type~I error measures
related to the FDR have also been discussed in
\cite{EfrTibStoTus01,GenWas02,Sto02,Sto03,Efr04,Efr07,SunCai07,Efr08},
while \cite{SchSpj82,LanLinFer05,JinCai07} focused on estimation of the
proportion of correct null hypotheses.

Procedures like the {\v{S}}id{\'a}k, Bonferroni and BH, rely on the
set of $p$-values of individual tests. Their validity hinges on each
$p$-value statistic being stochastically equal to or greater than a
standard uniform variable under the null hypothesis. This fails,
however, with noncontinuous variables or when rank-based nonparametric
tests are used. Crucially, $p$-value based procedures also do not
exploit the power characteristics of the individual tests, contrary to
Neyman and Pearson's \cite{NeyPea33} adage that such considerations
are germane in constructing optimal tests. Such $p$-value based
procedures are fine in exchangeable settings where power
characteristics of the individual tests are identical, but not in
situations where genes or subclasses of genes have different
structures; see \cite{EfrAAS08,FerFriRueThoKon08,RoqWie09}.

Some papers dealing with procedures exploiting the power functions are
\mbox{\cite{Spj72,WesKriYou98}}. The use of weighted
$p$-values to improve type II performance have also been explored in
\cite{GenRoeWas06,WasRoe06,RubDudLaa06,KanYeLiuAllGao09,RoqWie09}.
Other approaches for optimal procedures are those in
\cite{Sto07,StoDaiLee07} which employ a Neyman--Pearson approach
and \cite{SunCai07} where oracle and adaptive compound rules were
obtained. Compound
rules are characterized by information borrowing from each of the genes,
so a decision function for a specific gene utilizes information
from other genes. Decision-theoretic
and Bayesian approaches were also implemented in
\cite{MulParRobRou04,SarZhoGho08,ScoBer06,Efr08,GuiMulZha09}.
More recently, \cite{EfrAAS08} argues for separate subclass analysis, while
\cite{FerFriRueThoKon08} proposed use of external covariates, with the
procedures having a Bayes and empirical Bayes flavor.

The main goal of this paper is to develop better multiple testing
procedures controlling weak FWER, strong FWER and FDR by taking into
account the individual powers of the tests. We focus on the most
fundamental setting where the null and alternative hypotheses for each
gene are both simple. This is also the setting in~\cite{RoqWie09}. This
admits, as starting point, the Neyman--Pearson most powerful (MP) test
for each pair of hypotheses. Each MP test will have a power, but we
will see that it is beneficial to look at each of these powers as
function of their MP test's size, their so-called receiver operating
characteristic (ROC) function.

The paper proceeds as follows.
Section \ref{section-Mathematical Setting} presents the
decision-theoretic elements.
Section \ref{section-Revisiting MP Tests} reviews and reexamines
MP tests, $p$-value statistics and ROC functions.
Section \ref{section-Optimal FWER Control} develops the optimal weak
FWER-controlling procedure, with existence and uniqueness established
in Section \ref{section-Existence}. Section \ref{subsection-Finding
Optimal Sizes} analytically describes the procedure for differentiable
ROC functions.
Section \ref{subsection-Concrete Examples} provides a concrete
example using normal distributions, while Section
\ref{subsection-Investing} discusses a size-investing strategy for
optimality.
Section \ref{section-Restrictions} discusses limitations, extensions
and connections:
Section \ref{subsection-Restricting to D0} deals with the
restriction to the class of simple procedures;
Section \ref{subsection-MLR Families} deals with extensions to
the composite hypotheses setting in the presence of the monotone
likelihood ratio (MLR) property; and
Section \ref{subsection-In Terms of P-Values} relates the
optimal procedure to weighted $p$-value based procedures.
Section \ref{section-strong FWER control} develops an improved
procedure which
strongly controls the FWER, whereas
Section \ref{section-FDR procedures} develops an
improved procedure which controls FDR.
The development of
these new procedures is anchored on the weak FWER-controlling optimal
procedure. We establish that the
sequential {\v{S}}id{\'a}k and BH
procedures are special cases of these more general procedures.
Section \ref{section-Simulated Comparison} provides
a modest simulation study demonstrating that the new FDR-controlling procedure
improves on the BH procedure.
Section \ref{section-Concluding Remarks} contains a summary and
some concluding remarks.

To manage the length of the paper and provide more focus on the
main ideas and results, technical proofs of lemmas, propositions,
theorems and
corollaries are all gathered in the supplemental article \cite{PenHabWu10Supp}.

\section{Mathematical setting}
\label{section-Mathematical Setting}

Let $(\Omega,\mathcal{F},\Prr)$ be a probability space
and $\mathcal{M} = \{1,2,\ldots,M\}$ an index set with $M$ a known positive
integer. For each $m \in\mathcal{M}$, let $X_m\dvtx(\Omega,\mathcal
{F}) \rightarrow(\mathcal{X}_m,
\mathcal{B}_m)$, $\mathcal{X}_m$ some space with $\sigma$-field of
subsets $\mathcal{B}_m$. Form the product space $(\mathcal
{X},\mathcal{B})$ with
$\mathcal{X} = \times_{m \in\mathcal{M}} \mathcal{X}_m$ and
$\mathcal{B} = \sigma(
\times_{m\in\mathcal{M}}\mathcal{B}_m)$ so
$X = (X_1,X_2,\ldots,X_M)\dvtx
(\Omega,\mathcal{F}) \rightarrow(\mathcal{X},\mathcal{B})$.
The probability measure
of $X$ is $Q = \Prr X^{-1}$, while the (marginal) probability measure of
$X_m$ is $Q_m =
PX_m^{-1}$.
For each $m \in\mathcal{M}$, let $Q_{m0}$ and $Q_{m1}$ be two known
probability measures on
$(\mathcal{X}_m,\mathcal{B}_m)$. We assume that $Q \in\mathcal{Q}$,
a class
of probability measures on $(\mathcal{X},\mathcal{B})$ with marginal
probability measure $Q_m \in\{Q_{m0},Q_{m1}\}$
for each $m \in\mathcal{M}$. Let
$\theta= (\theta_1,\ldots,\theta_M)\dvtx\mathcal{Q} \rightarrow
\Theta\equiv\{0,1\}^M$
with $\theta_m(Q) = I\{Q_m = Q_{m1}\}$,
$I\{\cdot\}$ denoting indicator function.
Define, for each $Q \in\mathcal{Q}$, the subcollections
%
$\mathcal{M}_0 \equiv\mathcal{M}_0(Q) = \{m \in\mathcal{M}\dvtx\theta
_m(Q) = 0\}$
and
$\mathcal{M}_1 \equiv\mathcal{M}_1(Q) = \{m \in\mathcal{M}\dvtx\theta
_m(Q) = 1\}$.
%
In this paper, we shall impose an \textit{independence condition} given by:
\renewcommand{\theCondition}{(I)}
\begin{Condition}\label{condI}
$(X_m, m \in\mathcal{M}_0(Q))$ is an independent collection of random
entities,
that is, $\forall B_m \in\mathcal{B}_m$,
%
$Q(\times_{m \in\mathcal{M}_0(Q)} B_m) = \prod_{m \in\mathcal{M}_0(Q)}
Q_m(B_m)$.
\end{Condition}

However, the collection $(X_m, m \in\mathcal{M}_1(Q))$ need not be an
independent collection, but it is independent of
$(X_m, m \in\mathcal{M}_0(Q))$.
Two extreme subcollections of $\mathcal{Q}$ are
%
$\mathcal{Q}_0 = \{Q \in\mathcal{Q}\dvtx\theta_m(Q) = 0, \forall
m \in\mathcal{M}\}$
and
$\mathcal{Q}_1 = \{Q \in\mathcal{Q}\dvtx\theta_m(Q) = 1, \forall
m \in\mathcal{M}\}$.
%
By Condition \ref{condI}, $\mathcal{Q}_0$ is a singleton
set, $Q_0$ will denote its element; while
$\mathcal{Q}_1$ need not be a singleton set.
The decision problem is
to determine $\mathcal{M}_0(Q)$ and $\mathcal{M}_1(Q)$ based on
$X$, which is equivalent to simultaneously testing
the $M$ pairs of hypotheses
$H_{m0}\dvtx Q_m = Q_{m0}$ versus
$H_{m1}\dvtx Q_m = Q_{m1}$ for $m \in\mathcal{M}$.

We adopt a decision-theoretic framework
similar to \cite{SarZhoGho08}.
The \textit{action space}
is $\mathcal{A} = \{0,1\}^M$ with generic element $a = (a_1,a_2,\ldots
,a_M)\trp\in\mathcal{A}$
with $a_m = 0 (1)$ meaning $H_{m0}$ is accepted (rejected).
The \textit{parameter space} is $\mathcal{Q}$, though the effective
parameter space is $\Theta= \{0,1\}^M$ with generic element $\theta=
(\theta_1,
\theta_2,\ldots,\break \theta_M)\trp$. We introduce several \textit{loss
functions}, $L\dvtx\mathcal{A} \times\mathcal{Q} \rightarrow\Re_+$,
defined via
%
%
\begin{eqnarray}
\label{k-FWER loss}
L_{0}(a,Q) & = & I\bigl\{a\trp\bigl(1-\theta(Q)\bigr) \ge1\bigr\};
\\
\label{FDR loss}
L_1(a,Q) & = & \biggl[ \frac{a\trp(1-\theta(Q))}{a\trp1}\biggr] I\{
a\trp1 > 0\};
\\
\label{MDR loss}
L_{2}(a,Q) & = & {(1-a)\trp\theta(Q)},
\end{eqnarray}
with the convention that $0/0 = 0$ and $1$ is an $M \times1$ vector of $1$'s.
The loss function
$L_{0}(a,Q)$ equals 1 if and only if at least one false discovery is
committed.
The loss $L_{1}(a,Q)$ is the \textit{false discovery proportion},
being the ratio between the number of false discoveries and the
number of discoveries; whereas the loss $L_2(a,Q)$ is
the \textit{number of missed discoveries} being the number of true
alternative hypotheses that were not discovered. We focus on this missed
discovery number since the relevant question is how many correct
alternatives [$\theta(Q)\trp1$] were missed by using the action $a$?
See also \cite{RoqWie09} which essentially uses this
loss function to induce their power metric.
Other types of losses, such as the false negative proportion with
%
$(a,Q) \mapsto{[(1-a)\trp\theta(Q)]}/{[(1-a)\trp1]} I\{(1-a)\trp1
> 0\}$,
%
have also been considered; see \cite{GenWas02,SarZhoGho08}.

A \textit{nonrandomized} multiple decision function (MDF) is
a $\delta\dvtx(\mathcal{X},\mathcal{B}) \rightarrow(\mathcal
{A}$, $\sigma(\mathcal{A}))$, where
$\sigma(\mathcal{A})$ is the power set of $\mathcal{A}$. Such an MDF
may be represented by
%
$\delta(x) = (\delta_1(x),\delta_2(x),\ldots,\delta_M(x))\trp$,
%
where $\delta_m(x) \in\{0,1\}$. In general, each $\delta_m$ could
be made to depend on the
full data $x$ instead of just $x_m$. We denote by
$\mathcal{D}$ the class of all nonrandomized
MDFs.
A \textit{randomized} MDF may also be considered. Denote by $\mathcal
{P}(\mathcal{A})$
the space of all probability measures over $(\mathcal{A},\sigma
(\mathcal{A}))$.
A randomized MDF
is a $\delta^*\dvtx (\mathcal{X},\mathcal{B}) \rightarrow(\mathcal
{P}(\mathcal{A}),\sigma(\mathcal{P}(\mathcal{A})))$. For a realization
$X = x$, an action is chosen from $\mathcal{A}$
according to the probability measure $\delta^*(x)$.
Denote by $\mathcal{D}^*$ the space of all randomized MDFs. Clearly,
$\mathcal{D}
\subset\mathcal{D}^*$.
By augmenting data $X$ with a randomizer $U \sim U(0,1)$ which is
independent of $X$, randomized MDFs could be made nonrandomized with
respect to the \textit{augmented data} $(X,U)$.
Henceforth, $\mathcal{D}$ represents all \textit{non}randomized MDFs
$\delta(X,U)$'s based on $(X,U)$.

For brevity of notation, $\Prr_Q\{f(X,U) \in B\}$ and $E_Q\{f(X,U)\}$
represent probability and expectation with respect to $(X,U)$ with $X
\sim Q$, $U \sim U(0,1)$ and $X$ and $U$ independent. For
$\delta\in\mathcal{D}$ and the loss functions defined earlier, we have
the \textit{risk functions}
%
%
\begin{eqnarray}
\label{risk 0k}
R_{0}(\delta,Q) &=& E_Q \{L_{0}(\delta(X,U),Q)\};
\\
%
\label{risk fdr}
R_1(\delta,Q) &=& E_Q\{
L_1(\delta(X,U),Q)\};
%
\\
\label{risk mdr}
R_2(\delta,Q) &=& E_Q \{
L_2(\delta(X,U),Q)\}.
\end{eqnarray}

Given a $\delta= (\delta_1,\delta_2,\ldots,\delta_M)\trp$, let
%
$\pi_\delta(Q) = (\pi_{\delta_1}(Q),\pi_{\delta_2}(Q),\ldots,\pi
_{\delta_M}(Q))\trp$
%
with
%
$\pi_{\delta_m}(Q) = E_Q \{\delta_m(X,U)\}$
be its vector of power functions.
Then (\ref{risk mdr}) becomes
%
$R_2(\delta,Q) =
(1 - \pi_\delta(Q))\trp\theta(Q)$.
%
In terms of these risk functions, for $\delta\in\mathcal{D}$,
its weak FWER is
%
$\operatorname{FWER}(\delta) = R_{0}(\delta,Q_0)$.
%
If each $\delta_m$ depends only on $X_m$ and $U$, by Condition \ref{condI},
%
%
\begin{equation}
\label{weak FWER}
\operatorname{FWER}(\delta) = 1 - E\biggl\{\prod_{m \in\mathcal{M}}
[1 - \Prr_{Q_{m0}}\{\delta_m(X_m,U) = 1|U\}]\biggr\},
\end{equation}
where the expectation is with respect to $U$.
When $Q = Q_0$ and with the $m$th component $\delta_m^*$ of the
randomized MDF depending only on $X_m$,
an alternative formulation is
to have $U = (U_1, U_2, \ldots, U_M)$ a vector of i.i.d.
$U(0,1)$ variables which is independent of the $X_m$'s.
The $m$th component may then be
redefined via $\delta_m(X_m,U_m) = I\{U_m \le\delta_m^*(X_m)\}$.
Then (\ref{weak FWER}) becomes
%
$\operatorname{FWER}(\delta) = 1 - \prod_{m \in\mathcal{M}}
[1 - \Prr_{Q_{m0}}\{\delta_m(X_m,U_m) = 1\}]$.
%

The risk function $R_1(\delta,Q)$ is the false discovery \textit{rate}
(FDR) of $\delta$ at $Q$ \cite{BenHoc95}; while the risk function
$R_2(\delta,Q)$ will be called the missed discovery \textit{rate} (MDR)
of $\delta$ at~$Q$. The adjective ``rate'' is somewhat misleading since
$R_2(\delta,Q)$ takes values in $[0,|\mathcal{M}_1(Q)|]$ instead of
$[0,1]$; however, this does not cause difficulty since, given the true
underlying probability measure $Q$ of $X$, $|\mathcal{M}_1(Q)|$ is
constant. This risk is related to the expected number of true positives
(ETP), an error measure used in \cite{Spj72,Sto07}, via
$\operatorname{ETP}(\delta,Q) = |\mathcal{M}_1(Q)| - R_2(\delta,Q)$.

To find an optimal MDF \textit{weakly} controlling FWER in a subclass
\mbox{$\mathcal{D}_0 \subseteq\mathcal{D}$}, a~threshold $\alpha\in(0,1)$
is specified and then we seek a $\delta^* \in\mathcal{D}_0$ with
%
$R_{0}(\delta^*,Q_0) = \operatorname{FWER}(\delta^*) \le\alpha$,
%
and such that for any $\delta\in\mathcal{D}_0$ satisfying
$R_{0}(\delta,Q_0) = \break \operatorname{FWER}(\delta) \le\alpha$, we have
%
$\sup_{Q \in\mathcal{Q}} R_2(\delta^*,Q) \le\sup_{Q \in\mathcal
{Q}} R_2(\delta,Q)$.
%
This criterion has a minimax flavor. One may require
only that $R_2(\delta^*,Q^*) \le R_2(\delta,Q^*)$ where $Q^*$ is the
true, but unknown,
probability law of $X$; but this may be too strong to preclude a
solution to the optimization problem. However, see \cite{Sto07} for a
situation with
a different type I error and where an optimal, albeit an oracle,
solution for
minimizing $R_2(\delta,Q^*)$ is possible.
Observe that for $\delta\in\mathcal{D}$, by using the representation of
$R_2(\delta,Q)$ in terms of the powers,
%
$\sup_{Q \in\mathcal{Q}} R_2(\delta,Q) = \sup_{Q \in\mathcal
{Q}_1} R_2(\delta,Q) =
M - \inf_{Q \in\mathcal{Q}_1} \sum_{m \in\mathcal{M}}
\pi_{\delta_m}(Q)$.
%
The optimality condition on the MDR
amounts therefore to maximizing
$\sum_{m \in\mathcal{M}} \pi_{\delta_m}(Q_{m1})$.
Interestingly, if we had standardized the loss function $L_2(a,Q)$ to take
values in $[0,1]$ via division by $|\mathcal{M}_1(Q)| = \theta(Q)\trp1$,
the minimax justification does not carry through!

For \textit{strong} FWER control, we seek
a compound MDF, $\delta^* \in\mathcal{D}$, with
%
$R_{0}(\delta^*$, $Q^*) \le\alpha$
%
\textit{whatever} the true, but unknown, probability law $Q^*$ of $X$
is, and with $\sum_{m \in\mathcal{M}} \pi_{\delta_m^*}(Q_{m1})$ large,
possibly maximal, among all $\delta\in\mathcal{D}$ satisfying
$R_{0}(\delta,Q^*) \le\alpha$.
For (strong) FDR-control, a threshold $q^* \in(0,1)$ is specified
and we seek a compound MDF, $\delta^* \in\mathcal{D}$,
such that, \textit{whatever} $Q^*$ is,
%
$R_1(\delta^*,Q^*) \le q^*$,
%
and with
$\sum_{m \in\mathcal{M}} \pi_{\delta_m^*}(Q_{m1})$ large, possibly
maximal, among
all $\delta\in\mathcal{D}$ satisfying $R_{1}(\delta,Q^*) \le q^*$.
For discussion of weak
and strong control, refer to \cite{DudShaBol03,DudLaa08}.
Discussion of optimality in multiple testing can be found in \cite{LehRomSha05}
where maximin optimality results are established for some step-down and
step-up MTPs.

\section{Revisiting MP tests and $p$-value statistics}
\label{section-Revisiting MP Tests}

An MDF $\delta\in\mathcal{D}$ whose $m$th component $\delta_m$
depends only on
$(X_m,U_m)$ for every $m \in\mathcal{M}$ is called simple; otherwise,
it is
compound. The subclass of simple MDFs,
denoted by $\mathcal{D}_0$, will be our initial focus in searching for an
optimal weak FWER-controlling MDF. The resulting optimal MDF will
then anchor our search for strong FWER- and FDR-controlling compound MDFs.
Before implementing this program, we introduce the unifying concept of
decision processes.

\subsection{Decision processes, ROC functions, $p$-value statistics}
\label{section-Most Powerful Tests}

First, a brief review. Let $X\dvtx (\Omega,\mathcal{A}) \rightarrow
(\mathcal{X},\mathcal{B})$ and $Q = \Prr X^{-1}$. Based on $X$,
consider testing the pair of hypotheses
%
$H_0\dvtx Q = Q_0$ versus $H_1\dvtx Q = Q_1$,
%
where $Q_0$ and $Q_1$ are two probability measures on $(\mathcal
{X},\mathcal{B})$. Let $q_0$
and $q_1$ be versions of the densities of $Q_0$ and $Q_1$ with respect
to some
fixed dominating measure $\nu$, for example, $\nu= Q_0 + Q_1$.
Recall that a test or decision function is a
%
$\delta\dvtx (\mathcal{X},\mathcal{B}) \rightarrow([0,1],\sigma[0,1])$,
%
with $\sigma[0,1]$ the Borel sigma-field on $[0,1]$.
Given $X = x$, $\delta(x)$ is the
probability of deciding in favor of $H_1$. Its size is
%
$\alpha_\delta= E_{Q_0}\delta(X);$
%
it is of level $\alpha\in[0,1]$ if
$\alpha_\delta\le\alpha$. Its power is
%
$\pi_\delta= E_{Q_1} \delta(X)$.
%
$\delta^*$ is most powerful (MP)
of level $\alpha$ if $\alpha_{\delta^*}
\le\alpha$ and for all $\delta$ with $\alpha_\delta\le\alpha$,
we have
$\pi_{\delta^*} \ge\pi_{\delta}$.
\begin{defn}
\label{defn-decision process}
A collection $\Delta= \{\delta_\eta\dvtx \eta\in[0,1]\}$ of test
functions such that,
a.e. $[Q]$, $\delta_0(x) = 0$, $\delta_1(x) = 1$ and $\eta\mapsto
\delta_\eta(x)$
is nondecreasing and right-continuous, is a {decision} process.
Its \textit{size} function is
$A_\Delta\dvtx [0,1] \rightarrow[0,1]$ and its \textit{power} function is
$\rho_\Delta\dvtx [0,1]
\rightarrow[0,1]$, where
%
$A_\Delta(\eta) = \alpha_{\delta_\eta} = E_{Q_0} \delta_\eta(X)$ and
$\rho_\Delta(\eta) = \pi_{\delta_\eta} = E_{Q_1} \delta_\eta(X)$.
%
Its \textit{receiver operating characteristic} (ROC) curve is
$\operatorname{ROC}(\Delta) \equiv\operatorname{Graph}\{(A_\Delta(\eta),
\rho_\Delta(\eta))\dvtx \eta\in[0,1]\}$.
If $A_\Delta(\eta) = \eta$ for all $\eta\in[0,1]$,
$\eta\mapsto\rho_\Delta(\eta)$ is the \textit{ROC function} of
$\Delta$.
\end{defn}

The use of the phrase \textit{power function} in
Definition \ref{defn-decision process} is atypical since we are not viewing
this as a function of a parameter as is the usual meaning of this phrase.
However, for lack of a better name, we shall adopt this terminology.
In the sequel, $\delta_\eta$ and $\delta(\eta)$
will be used interchangeably to also represent $\delta(\cdot;\eta)$.

Let $L\dvtx (\mathcal{X},\mathcal{B}) \rightarrow(\Re_+,\sigma(\Re
_+))$ be a version of the
likelihood ratio function:
$L(x) = q_1(x)/q_0(x)$ a.e. $[\nu]$.
Let $G_0(\cdot)$ and $G_1(\cdot)$ be the distribution
functions of $L(X)$ when $\mathcal{L}(X) = Q_0$ and $\mathcal{L}(X) =
Q_1$, where
$\mathcal{L}(X)$ is probability measure of $X$. For a monotone nondecreasing
right-continuous function $M(\cdot)$ from $\Re$ into $\Re$, let
%
$M^{-1}(r) = \inf\{x \in\Re\dvtx M(x) \ge r\}$
and
$\Delta M(r) = M(r) - M(r-)$.
%
By the Neyman--Pearson fundamental lemma \cite{NeyPea33},
the MP test function of level $\eta$ for testing
$H_0$ versus $H_1$ is
%
%
\begin{equation}
\label{NP Test}
\delta^*(X;\eta) \equiv\delta^*_\eta
= I\{L(X) > c(\eta)\} + \gamma(\eta) I\{L(X) = c(\eta)\},
\end{equation}
where
%
$c(\eta) = G_0^{-1}(1-\eta)$ and
$\gamma(\eta) = {(G_0(c(\eta)) - (1-\eta))}/{\Delta G_0(c(\eta))}$.
%
Let $U \sim U(0,1)$ be independent of $X$. Redefine $\delta^*$ via
$\delta_\eta^{**} \equiv\delta^{**}(X,U;\eta) = I\{U \le\delta
^*(X;\eta)\}$,
%
%
which is nonrandomized w.r.t. $(X,U)$. In essence, with the aid of an auxiliary
randomizer $U$, the MP test could always be made nonrandomized.
The decision process formed from these MP tests, given by
%
%
\begin{equation}
\label{MP test process}
\Delta^* = \{\delta_\eta^*\dvtx \eta\in[0,1]\} = \{\delta_\eta
^{**}\dvtx
\eta\in[0,1]\},
\end{equation}
is called the most powerful (MP) decision process.
The power (at $Q = Q_1$) of the MP test $\delta_\eta^*$ or $\delta
_\eta^{**}$ is
%
%
\begin{equation}
\label{NP Test Power}
\rho_{\Delta^*}(\eta) \equiv\pi_{\delta^*_\eta} = \pi_{\delta
^{**}_\eta} =
1 - G_1(c(\eta)) + \gamma(\eta) \Delta G_1(c(\eta)).
\end{equation}
It is well known \cite{Leh97} that $\pi_{\delta^*_\eta} < 1$ implies
$\alpha_{\delta^*_\eta} = \eta$.
We denote by $A_{\Delta^*}$ and
$\rho_{\Delta^*}$ the size and power functions of $\Delta^*$.
If $\pi_{\delta^*_\eta} < 1$ for all $\eta< 1$, then
%
$\eta\mapsto\rho_{\Delta^*}(\eta)$
%
is the ROC function of $\Delta^*$. We present below some important
properties of this function.

Before stating the proposition, we reiterate that all formal
proofs of propositions, theorems, lemmas and corollaries are in
the supplemental article \cite{PenHabWu10Supp}.

\begin{prop}\label{prop-properties of power}
The function $\rho_{\Delta^*}\dvtx [0,1] \rightarrow[0,1]$ in
(\ref{NP Test Power}) is
concave, continuous and nondecreasing.
Furthermore, $\rho_{\Delta^*}(\eta) \ge\eta$ and it is
strictly increasing on the set
$\mathcal{N}_{<} \equiv\{\eta\in[0,1]\dvtx \rho_{\Delta^*}(\eta) <
1\}$.
\end{prop}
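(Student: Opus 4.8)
The plan is to establish each asserted property of $\rho_{\Delta^*}$ by exploiting the variational characterization of the MP test rather than grinding through the explicit formula in (\ref{NP Test Power}). First I would recall from the Neyman-Pearson Lemma that $\delta^*_\eta$ is, among all test functions of size $\le\eta$, the one maximizing the power at $Q_1$, and that its size is exactly $\eta$ whenever its power is below $1$. The inequality $\rho_{\Delta^*}(\eta)\ge\eta$ is then immediate: the trivial test $\delta\equiv\eta$ (reject with constant probability $\eta$) has size $\eta$ and power $\eta$, so the \emph{most powerful} test of level $\eta$ must have power at least $\eta$. Monotonicity is equally clean: if $\eta_1\le\eta_2$, then $\delta^*_{\eta_1}$ is a test of size $\eta_1\le\eta_2$, hence admissible in the class over which $\delta^*_{\eta_2}$ is optimal, so $\rho_{\Delta^*}(\eta_1)=\pi_{\delta^*_{\eta_1}}\le\pi_{\delta^*_{\eta_2}}=\rho_{\Delta^*}(\eta_2)$; alternatively one reads monotonicity directly off Definition \ref{defn-decision process}, since $\eta\mapsto\delta^*_\eta(x)$ is nondecreasing and expectation preserves this.

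For concavity, the key step is a randomization (interpolation) argument. Given $\eta_1,\eta_2\in[0,1]$ and $\lambda\in[0,1]$, set $\eta_\lambda=\lambda\eta_1+(1-\lambda)\eta_2$ and consider the mixed test $\delta=\lambda\delta^*_{\eta_1}+(1-\lambda)\delta^*_{\eta_2}$. This $\delta$ is a legitimate test function (values in $[0,1]$), its size under $Q_0$ is $\lambda\eta_1+(1-\lambda)\eta_2=\eta_\lambda$ by linearity of $E_{Q_0}$, and its power under $Q_1$ is $\lambda\rho_{\Delta^*}(\eta_1)+(1-\lambda)\rho_{\Delta^*}(\eta_2)$. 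Since $\delta^*_{\eta_\lambda}$ is most powerful of level $\eta_\lambda$, we get
$$\rho_{\Delta^*}(\eta_\lambda)\ \ge\ \lambda\rho_{\Delta^*}(\eta_1)+(1-\lambda)\rho_{\Delta^*}(\eta_2),$$
which is exactly concavity on $[0,1]$. Continuity on the open interval $(0,1)$ is then automatic, since a finite concave function on an interval is continuous on its interior; continuity at the endpoints follows from $\delta^*_0(x)=0$ and $\delta^*_1(x)=1$ a.e.\ (so $\rho_{\Delta^*}(0)=0$, $\rho_{\Delta^*}(1)=1$) together with monotonicity and the squeeze $\eta\le\rho_{\Delta^*}(\eta)\le 1$, the latter forcing $\rho_{\Delta^*}(\eta)\to1$ as $\eta\to1$, while concavity plus $\rho_{\Delta^*}(\eta)\ge\eta\ge0$ and $\rho_{\Delta^*}(0)=0$ pins down right-continuity at $0$.

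The last claim — strict monotonicity on $\mathcal{N}_{<}=\{\eta:\rho_{\Delta^*}(\eta)<1\}$ — is where I expect the real work. Note $\mathcal{N}_<$ is an interval of the form $[0,\eta^\dagger)$ or $[0,\eta^\dagger]$ by monotonicity. Suppose $\rho_{\Delta^*}$ were flat on some subinterval $[\eta_1,\eta_2]\subseteq\mathcal{N}_<$ with $\eta_1<\eta_2$; since $\rho_{\Delta^*}(\eta_2)<1$, concavity would then force $\rho_{\Delta^*}$ to stay at that constant value for all $\eta\ge\eta_1$ in $[0,1]$ (a concave function that is constant on a subinterval and lies below a later point cannot later increase), contradicting $\rho_{\Delta^*}(1)=1$. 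To make this rigorous I would invoke the standard fact that for a concave function, the difference quotient $(\rho_{\Delta^*}(\eta)-\rho_{\Delta^*}(\eta_1))/(\eta-\eta_1)$ is nonincreasing in $\eta$; flatness on $[\eta_1,\eta_2]$ makes this quotient zero there, hence zero or negative for all $\eta>\eta_2$, so $\rho_{\Delta^*}(\eta)\le\rho_{\Delta^*}(\eta_2)=\rho_{\Delta^*}(\eta_1)$ for all $\eta\ge\eta_1$, and in particular $\rho_{\Delta^*}(1)=\rho_{\Delta^*}(\eta_1)<1$, a contradiction. Alternatively, one can argue more concretely: on $\mathcal{N}_<$ the relation $\alpha_{\delta^*_\eta}=\eta$ holds, and if $\rho_{\Delta^*}(\eta_1)=\rho_{\Delta^*}(\eta_2)$ with $\eta_1<\eta_2$ then $\delta^*_{\eta_2}$ has strictly larger size but no larger power than $\delta^*_{\eta_1}$, so $\delta^*_{\eta_1}$ would already be most powerful at level $\eta_2$; by the uniqueness part of the Neyman-Pearson Lemma (a.e.\ uniqueness of the MP test when the power is not $1$) this would force $\delta^*_{\eta_1}=\delta^*_{\eta_2}$ a.e., contradicting $\alpha_{\delta^*_{\eta_1}}=\eta_1\ne\eta_2=\alpha_{\delta^*_{\eta_2}}$. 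Either route closes the proof; the concavity-based argument is cleaner and I would present that one.
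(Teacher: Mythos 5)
Your proposal is correct and follows essentially the same route as the paper: the inequality $\rho_{\Delta^*}(\eta)\ge\eta$ via unbiasedness of the MP test, concavity via the mixture test $\xi\delta^*_{\eta_1}+(1-\xi)\delta^*_{\eta_2}$ compared against the MP test of the interpolated size, continuity from concavity, and strict monotonicity on $\mathcal{N}_<$ by showing a flat stretch below $1$ would violate concavity given $\rho_{\Delta^*}(1)=1$. The only (harmless) differences are that you obtain monotonicity directly from the MP property rather than deducing it from concavity, and you are slightly more explicit about continuity at the endpoints.
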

\begin{defn}
\label{defn-p value statistic}
Let $\Delta= \{\delta_\eta\dvtx \eta\in[0,1]\}$ be a decision process,
where $\delta_\eta\dvtx (\mathcal{X} \times[0,1], \mathcal{B} \otimes
\sigma[0,1])
\rightarrow(\{0,1\},\sigma\{0,1\})$. Its (randomized) $p$-value
statistic is
$S_\Delta\dvtx (\mathcal{X} \times[0,1], \mathcal{B} \otimes\sigma[0,1])
\rightarrow([0,1],\sigma[0,1])$ with
%
$S_\Delta(x,u) = \inf\{\eta\in[0,1]\dvtx \delta_\eta(x,u) = 1\}$.
%
\end{defn}

When $\forall(\eta, x, u)\dvtx \delta_\eta(x,u) = \delta_\eta(x)$,
then $S_\Delta(X,U)$ is the usual
$p$-value statistic. See also
\cite{CoxHin74} for a more specialized definition of
a randomized $p$-value statistic.
We refer the reader to \cite{HabPen10} for properties of this $p$-value
statistic and its use in existing FDR-controlling procedures.
\begin{prop}\label{prop-distributions of p value}
Let $\Delta= \{\delta_\eta\dvtx \eta\in[0,1]\}$ be a decision process with
$p$-value statistic $S_\Delta$.
Then, for all $s \in[0,1]$, $H_0(s) \equiv\Prr_{Q_0}(S_\Delta\le s)
= A_\Delta(s)$
and $H_1(s) \equiv\Prr_{Q_1}(S_\Delta\le s) = \pi_{\delta(s)} =
\rho_{\Delta}(s)$.
Consequently, $S_\Delta\sim U[0,1]$ under $\mathcal{L}(X) = Q_0$ if
and only if
$\forall\eta\in[0,1]\dvtx A_\Delta(\eta) = \eta$.
\end{prop}

\section{Optimal \textit{weak} FWER control}
\label{section-Optimal FWER Control}

Return now to the multiple decision problem in Section
\ref{section-Mathematical Setting}. We extend the notion of
decision processes to the multiple decision setting.
\begin{defn}
\label{defn-multiple decision process}
A collection $\bolds{\Delta} = (\Delta_m\dvtx m \in\mathcal{M})$,
where $\Delta_m =
(\delta_m(\eta)\dvtx \eta\in[0,1])$ is a decision process on $(\mathcal
{X} \times
[0,1]^M, \mathcal{B} \otimes\sigma[0,1]^M)$, is
a multiple decision process (MDP). It is {simple} if each $\Delta_m$
is simple; otherwise, it is {compound}.
When simple its {multiple decision size function} is
%
$\mathbf{A}_{\bolds{\Delta}} =
(A_{\Delta_m}\dvtx m \in\mathcal{M})$
%
and its {multiple decision ROC function} is
%
$\bolds{\rho}_{\bolds{\Delta}} =
(\rho_{\Delta_m}\dvtx m \in\mathcal{M})$,
%
where $A_{\Delta_m}$ and $\rho_{\Delta_m}$ are the size and ROC functions
of $\Delta_m$.
\end{defn}

\subsection{Optimization problem}
\label{subsection-optimization}
Let $\bolds{\Delta}$ be a simple MDP. Then, a
multiple decision size vector $\bolds{\eta} = (\eta_m\dvtx m \in
\mathcal{M}) \in
\mathcal{N} \equiv[0,1]^M$ determines from $\bolds{\Delta}$ an MDF
%
$\delta_{\bolds{\Delta}}(\bolds{\eta}) =
(\delta_m(\eta_m)\dvtx m \in\mathcal{M}) \in\mathcal{D}_0$.
%
For this MDF,
%
$\operatorname{FWER}(\delta_{\bolds{\Delta}}(\bolds{\eta})) =
1 - \prod_{m \in\mathcal{M}} [1 - A_{\Delta_m}(\eta_m)]$
%
and
%
$R_2(\delta_{\bolds{\Delta}}(\bolds{\eta}),Q_1) =
M - \sum_{m \in\mathcal{M}} \rho_{\Delta_m}(\eta_m)$
%
for $Q_1 \in\mathcal{Q}_1$.
Fix an FWER-threshold $\alpha\in(0,1)$. Suppose there exists
a multiple decision size vector $\bolds{\eta}_{\bolds{\Delta
}}^*(\alpha) \in\mathcal{N}$
such that
\[
\bolds{\eta}_{\bolds{\Delta}}^*(\alpha) =
\mathop{\arg\max}_{\bolds{\eta} \in\mathcal{N}}
\biggl\{\sum_{m \in\mathcal{M}} \rho_{\Delta_m}(\eta_m)\dvtx
\prod_{m \in\mathcal{M}} [1 - A_{\Delta_m}(\eta_m)]
\ge1 - \alpha\biggr\}.
\]
Then, $\mathbf{A}_{\Delta}(\bolds{\eta}_{\Delta}^*(\alpha)) =
(A_{\Delta_m}(\eta_{\bolds{\Delta},m}^*(\alpha))\dvtx m \in\mathcal
{M})$ is the optimal
multiple decision size vector for weak FWER control at $\alpha$ associated
with the simple MDP $\bolds{\Delta}$. The associated optimal simple
MDF is
$\delta_{\bolds{\Delta}}(\mathbf{A}_{\Delta}(\bolds\eta
_{\bolds{\Delta}}^*(\alpha)))$.

But, since $H_{m0}$ and $H_{m1}$ are both simple, then there exists a simple
most powerful MDP,
%
$\bolds{\Delta}^* = (\Delta_m^*\dvtx m \in\mathcal{M})$,
%
where $\Delta_m^* = (\delta_m^*(\eta)\dvtx \eta\in[0,1])$ with
$\delta_m^*(\eta)$ being the simple Neyman--Pearson MP test function
of size $\eta$ for $H_{m0}$ versus $H_{m1}$. Consider the simple MDF
obtained from
$\bolds{\Delta}^*$ given by
%
$(
\delta_m^*(A_{\Delta_m}(\bolds{\eta}_{\bolds{\Delta
},m}^*(\alpha)))\dvtx m \in\mathcal{M}
)$.
%
This will satisfy the FWER constraint, and by virtue of the MP
property of each $\delta_m^*(A_{\Delta_m}(\bolds{\eta}_{
\bolds\Delta,m}^*(\alpha)))$
for each $m \in\mathcal{M}$,
\[
\sum_{m \in\mathcal{M}}
\rho_{\Delta_m^*}(A_{\Delta_m}(\bolds{\eta}_{\bolds{\Delta
},m}^*(\alpha)))
\ge
\sum_{m \in\mathcal{M}}
\rho_{\Delta_m}(A_{\Delta_m}(\bolds{\eta}_{\bolds{\Delta
},m}^*(\alpha))).
\]
Thus, in searching for the optimal weak FWER-controlling
simple MDF, it suffices to restrict to the simple most powerful MDP
$\bolds{\Delta}^*$. Without loss of generality (wlog), we may assume
$A_{\Delta_m^*}(\eta) = \eta$ for $m \in\mathcal{M}$ and $\eta\in[0,1]$.
The optimization problem reduces to finding a
$\bolds{\eta}_{\bolds{\Delta}^*}^*(\alpha) \in\mathcal{N}$ satisfying
%
%
\begin{equation}
\label{optimal FWER size vector}
\bolds{\eta}_{\bolds{\Delta}^*}^*(\alpha) =
\mathop{\arg\max}_{\bolds{\eta} \in\mathcal{N}}
\biggl\{\sum_{m \in\mathcal{M}} \rho_{\Delta_m^*}(\eta_m)\dvtx
\prod_{m \in\mathcal{M}} (1 - \eta_m)
\ge1 - \alpha\biggr\}.
\end{equation}
The optimal \textit{weak} FWER-controlling simple MDF is then
%
%
\begin{equation}
\label{optimal weak FWER controlling MDF}
\delta^*_{W}(\alpha) \equiv
\bigl(\delta_m^*(\eta_{\bolds{\Delta}^*,m}^*(\alpha))\dvtx m \in\mathcal{M}\bigr).
\end{equation}

Two well-known and conventional choices for the size vector
$\bolds{\eta} = (\eta_m\dvtx m \in\mathcal{M})$ which satisfy the
weak FWER constraint are the {\v{S}}id{\'a}k sizes
%
$\eta_m = \eta_m(\alpha) = 1 - (1-\alpha)^{{1}/{M}}$
%
and the Bonferroni-adjusted sizes
%
$\eta_m = \eta_m(\alpha) = {\alpha}/{M}$.
%
The former requires the independence Condition \ref{condI} and is sharp,
the latter is conservative but does not require Condition \ref{condI}. Both
ignore possible differences in power traits of the individual
test functions.

\subsection{Existence and uniqueness of optimal size vector}
\label{section-Existence}

We establish the existence of an optimal multiple decision size vector for
weak FWER control within the class $\mathcal{D}_0$.
As pointed out in Section \ref{subsection-optimization}, it
suffices to
look for the optimal weak FWER-controlling simple MDF by starting with the
most powerful simple MDP $\bolds{\Delta}^* = (\Delta_m^*\dvtx m \in
\mathcal{M})$.
For brevity, $\rho_m \equiv\rho_{\Delta_m^*}$ and
$A_m(\eta) \equiv A_{\Delta_m^*}(\eta) = \eta$. Recall that
$\mathcal{N} = [0,1]^M$, the multiple decision size space.
In a nutshell, the existence of an optimal multiple decision size vector
for weak FWER control exploits convexity properties of relevant subsets
of $\mathcal{N}$.
This is formalized by establishing a sequence of propositions which are
presented below.
For $\alpha\in[0,1]$, define the weak FWER constraint set
%
%
\begin{equation}
\label{C alpha}
C_\alpha=
\cases{
\displaystyle \biggl\{\bolds{\eta} \in\mathcal{N}\dvtx\sum_{m \in\mathcal{M}}
\log(1-\eta_m) \ge\log(1-\alpha)\biggr\}, &\quad
if $\alpha< 1$, \cr
\mathcal{N}, &\quad if $\alpha= 1$.}
\end{equation}
\begin{prop}
\label{prop-properties of C alpha}
$C_\alpha$ satisfies \textup{(i)} $\bolds{\eta} = \mathbf{0}
\in C_\alpha$; \textup{(ii)} $(\mathbf{0},\alpha_m) \in C_\alpha$ for all \mbox{$m
\in\mathcal{M}$}, where
$(\mathbf{0},\alpha_m)$ is the zero-vector with the $m$th element
replaced by $\alpha$; and
\textup{(iii)} it is convex and closed.
\end{prop}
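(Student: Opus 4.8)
The plan is to verify the three properties essentially by direct computation, treating each as a routine consequence of the definition of $C_\alpha$. For (i), the claim $\mathbf{0} \in C_\alpha$: when $\alpha = 1$ this is trivial since $C_1 = \mathcal{N}$; when $\alpha < 1$, substitute $\eta_m = 0$ for all $m$ to get $\sum_{m\in\mathcal{M}} \log(1-0) = 0 \ge \log(1-\alpha)$, which holds because $1-\alpha \le 1$ forces $\log(1-\alpha) \le 0$. For (ii), the point $(\mathbf{0},\alpha_m)$ has $\eta_k = 0$ for $k\neq m$ and $\eta_m = \alpha$; again assume $\alpha < 1$ (the case $\alpha=1$ is immediate) and compute $\sum_{k\in\mathcal{M}} \log(1-\eta_k) = \log(1-\alpha)$, so the defining inequality holds with equality. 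Both reductions to the $\alpha<1$ case are needed only because $\log(1-1)$ is undefined.

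For (iii), I would handle closedness and convexity separately, and again dispatch $\alpha=1$ trivially (the full cube $\mathcal{N}$ is closed and convex). For $\alpha < 1$: the function $\mathbf{\eta} \mapsto \sum_{m\in\mathcal{M}} \log(1-\eta_m)$ is continuous on $[0,1)^M$ and extends to $[0,1]^M$ as a $[-\infty,0]$-valued upper semicontinuous function (it tends to $-\infty$ as any $\eta_m \to 1$); since $C_\alpha$ is the superlevel set $\{\mathbf{\eta}\in\mathcal{N} : g(\mathbf{\eta}) \ge \log(1-\alpha)\}$ of an upper semicontinuous function intersected with the closed cube $\mathcal{N}$, it is closed. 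For convexity, note that each summand $\eta_m \mapsto \log(1-\eta_m)$ is concave on $[0,1)$ (its second derivative is $-1/(1-\eta_m)^2 < 0$), hence $g$ is concave, and a superlevel set of a concave function intersected with the convex set $\mathcal{N}$ is convex. One must be slightly careful that the points where some $\eta_m = 1$ cannot lie in $C_\alpha$ (since there $g = -\infty < \log(1-\alpha)$), so the concavity argument on $[0,1)^M$ suffices and no boundary pathology arises.

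The only mild obstacle is bookkeeping around the endpoint $\eta_m = 1$, where $\log(1-\eta_m) = -\infty$: one should observe once and for all that $C_\alpha$ for $\alpha < 1$ contains no point with any coordinate equal to $1$, so the logarithm is effectively evaluated only on $[0,1)$, where it is finite, continuous, and strictly concave. With that remark in hand, closedness follows from upper semicontinuity of $g$ (equivalently, $C_\alpha = \{\mathbf{\eta}\in\mathcal{N} : \prod_{m\in\mathcal{M}}(1-\eta_m) \ge 1-\alpha\}$, and the map $\mathbf{\eta}\mapsto\prod_{m}(1-\eta_m)$ is continuous on all of $\mathcal{N}$, giving closedness without any limiting argument), and convexity from concavity of $g$. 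I expect the proof to be short; there is no genuine difficulty, only the need to phrase the $\alpha=1$ and boundary cases cleanly.
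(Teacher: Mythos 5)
Your proof is correct and follows essentially the same route as the paper's: direct substitution for (i) and (ii), concavity of $\eta_m \mapsto \log(1-\eta_m)$ for convexity, and continuity for closedness. Your extra care at the boundary $\eta_m = 1$ (in particular the observation that one can rewrite the constraint as $\prod_{m}(1-\eta_m) \ge 1-\alpha$ with a genuinely continuous map on all of $\mathcal{N}$) tightens the paper's slightly loose appeal to ``continuity of the logarithm,'' but does not change the argument in any essential way.
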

%
%
\begin{prop}
\label{prop-boundary set}
For $\bolds{\eta}_0 \in\mathcal{N}$ let
$U(\bolds{\eta}_0) = \{ \bolds{\eta} \in\mathcal{N}\dvtx
\eta_m \ge\eta_{0m}, \forall m \in\mathcal{M}\}$,
the upper set of $\eta_0$, and let
$UB(C_\alpha) = \{\bolds{\eta} \in\mathcal{N}\dvtx C_\alpha\cap
U(\bolds{\eta}) =
\{\bolds{\eta}\}\}$,
the upper boundary set of $C_\alpha$.
Then, for all $\alpha\in[0,1)$,
$UB(C_\alpha) = \{
\bolds{\eta} \in\mathcal{N}\dvtx\sum_{m \in\mathcal{M}} \log
(1-\eta_m) = \log(1-\alpha)\}$.
%
\end{prop}
%
%
\begin{prop}
\label{prop-properties of Nb}
Let $\mathcal{N}_b \equiv\{ \bolds{\eta} \in\mathcal {N}\dvtx\sum_{m
\in\mathcal{M}} \rho_m(\eta_m) \ge Mb\}$ for $b \in[0,1]$. Then
$\{\mathcal{N}_b\dvtx b \in[0,1]\}$ satisfies \textup{(i)}
$\bolds{\eta} = \mathbf{1} \in\mathcal{N}_b$, \textup{(ii)} it is
closed and convex, and \textup{(iii)} $\mathcal{N} = \mathcal{N}_0
\supseteq\mathcal{N}_{b_1} \supseteq\mathcal{N}_{b_2}$ for $0 \le b_1
\le b_2 \le1$.
\end{prop}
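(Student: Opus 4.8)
The plan is to verify the three claimed properties of $\mathcal{N}_b$ more or less directly, leaning on Proposition \ref{prop-properties of power} for the behaviour of each $\rho_m = \rho_{\Delta_m^*}$. For part (i), I would simply note that by Proposition \ref{prop-properties of power} each $\rho_m$ satisfies $\rho_m(\eta) \ge \eta$, and since $\rho_m$ is the power of an MP test of size $1$ we have $\rho_m(1) = 1$; hence $\sum_{m \in \mathcal{M}} \rho_m(1) = M \ge Mb$ for every $b \in [0,1]$, so $\mathbf{1} \in \mathcal{N}_b$. This also shows $\mathcal{N}_b$ is nonempty, which matters for the later optimization argument.

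For part (ii), closedness follows from continuity of each $\rho_m$ (Proposition \ref{prop-properties of power}): the map $\mathbf{\eta} \mapsto \sum_{m} \rho_m(\eta_m)$ is continuous on $\mathcal{N} = [0,1]^M$, so $\mathcal{N}_b$ is the preimage of the closed set $[Mb,\infty)$ intersected with the compact set $\mathcal{N}$, hence closed. For convexity, I would take $\mathbf{\eta}_1, \mathbf{\eta}_2 \in \mathcal{N}_b$ and $\xi \in (0,1)$, and use the concavity of each $\rho_m$ (again Proposition \ref{prop-properties of power}) to get $\rho_m(\xi \eta_{1m} + (1-\xi)\eta_{2m}) \ge \xi \rho_m(\eta_{1m}) + (1-\xi)\rho_m(\eta_{2m})$ for each $m$; summing over $m \in \mathcal{M}$ and using that both $\mathbf{\eta}_j$ lie in $\mathcal{N}_b$ gives $\sum_m \rho_m(\xi\eta_{1m} + (1-\xi)\eta_{2m}) \ge \xi (Mb) + (1-\xi)(Mb) = Mb$, so the convex combination lies in $\mathcal{N}_b$. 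Note this is only a weak inequality, so unlike the convexity proof for $C_\alpha$ there is no need to invoke strict concavity.

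For part (iii), the identity $\mathcal{N} = \mathcal{N}_0$ is immediate since $\sum_m \rho_m(\eta_m) \ge 0 = M \cdot 0$ holds for all $\mathbf{\eta} \in \mathcal{N}$. The nesting $\mathcal{N}_{b_1} \supseteq \mathcal{N}_{b_2}$ for $b_1 \le b_2$ is also immediate: if $\sum_m \rho_m(\eta_m) \ge Mb_2$ then, since $Mb_2 \ge Mb_1$, also $\sum_m \rho_m(\eta_m) \ge Mb_1$.

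Honestly there is no real obstacle here; the proposition is a routine consequence of the concavity, continuity, and boundary behaviour of the ROC functions recorded in Proposition \ref{prop-properties of power}, together with the fact that a finite sum of concave (resp. continuous) functions is concave (resp. continuous) and that sublevel-type sets of concave functions are convex. If anything, the only point worth stating carefully is why each $\rho_m(1) = 1$: this is because $\delta_m^*(1)$ is the MP test of size $1$, which rejects with probability $1$ a.e., so its power is $1$; this is also consistent with the convention $\delta_1(x) = 1$ in Definition \ref{defn-decision process}. I would keep the whole proof to a few lines, structured as three short paragraphs mirroring (i), (ii), (iii).
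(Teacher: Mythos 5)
Your proof is correct and follows essentially the same route as the paper's: part (i) from $\rho_m(1)=1$, closedness from continuity of the $\rho_m$, convexity from their concavity (the paper points to the argument for $C_\alpha$ by analogy, and you are right that only weak concavity is needed here), and part (iii) directly from the definition. Nothing further is required.
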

%
%
\begin{prop}
\label{prop-characterization of B alpha}
Let $B_\alpha= \{ b \in[0,1]\dvtx \mathcal{N}_b \cap C_\alpha\ne
\varnothing\}$ for $\alpha\in[0,1)$
and let $b_\alpha^* = \sup B_\alpha$. Then
$B_\alpha= [0, b_\alpha^*]$.
\end{prop}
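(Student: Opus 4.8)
The plan is to establish $B_\alpha = [0,b_\alpha^*]$ by verifying three facts: (a) $0 \in B_\alpha$, so that $B_\alpha \neq \emptyset$ and $b_\alpha^*$ is well defined; (b) $B_\alpha$ is downward closed, i.e.\ $b \in B_\alpha$ and $0 \le b' \le b$ imply $b' \in B_\alpha$, which together with (a) forces $B_\alpha$ to equal either $[0,b_\alpha^*]$ or $[0,b_\alpha^*)$; and (c) $b_\alpha^* \in B_\alpha$, which rules out the half-open case.

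For (a), Proposition \ref{prop-properties of Nb}(iii) gives $\mathcal{N}_0 = \mathcal{N}$, and $C_\alpha$ is nonempty since $\mathbf{0} \in C_\alpha$ by Proposition \ref{prop-properties of C alpha}(i); hence $\mathcal{N}_0 \cap C_\alpha = C_\alpha \neq \emptyset$ and $0 \in B_\alpha$. For (b), Proposition \ref{prop-properties of Nb}(iii) gives $\mathcal{N}_{b'} \supseteq \mathcal{N}_b$ whenever $b' \le b$, so $\mathcal{N}_{b'} \cap C_\alpha \supseteq \mathcal{N}_b \cap C_\alpha \neq \emptyset$ and $b' \in B_\alpha$.

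For (c) I would argue by compactness of $\mathcal{N} = [0,1]^M$. From the definition of $\mathcal{N}_b$, the requirement that $\sum_{m \in \mathcal{M}} \rho_m(\eta_m) \ge Mb$ hold for every $b < b_\alpha^*$ is equivalent to $\sum_{m \in \mathcal{M}} \rho_m(\eta_m) \ge M b_\alpha^*$; hence $\mathcal{N}_{b_\alpha^*} \cap C_\alpha = \bigcap_{0 \le b < b_\alpha^*}(\mathcal{N}_b \cap C_\alpha)$. Each set $\mathcal{N}_b \cap C_\alpha$ with $b < b_\alpha^*$ is nonempty --- by definition of the supremum there is $b'' \in B_\alpha$ with $b < b'' \le b_\alpha^*$, and then (b) gives $b \in B_\alpha$ --- and is closed, being the intersection of the closed sets $\mathcal{N}_b$ (Proposition \ref{prop-properties of Nb}) and $C_\alpha$ (Proposition \ref{prop-properties of C alpha}); moreover the family is nested decreasing in $b$ and contained in the compact set $\mathcal{N}$. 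A nested family of nonempty closed subsets of a compact space has nonempty intersection, so $\mathcal{N}_{b_\alpha^*} \cap C_\alpha \neq \emptyset$, i.e.\ $b_\alpha^* \in B_\alpha$. (Equivalently: pick $b_n \uparrow b_\alpha^*$ and $\eta^{(n)} \in \mathcal{N}_{b_n} \cap C_\alpha$, extract by compactness a subsequence $\eta^{(n_k)} \to \eta^*$, use closedness of $C_\alpha$ to get $\eta^* \in C_\alpha$, and use continuity of each $\rho_m$ --- from Proposition \ref{prop-properties of power} --- to pass to the limit in $\sum_{m} \rho_m(\eta^{(n_k)}_m) \ge M b_{n_k}$.) The only step requiring any care is this limiting argument; everything else follows directly from the closedness, convexity, and monotonicity properties already recorded in Propositions \ref{prop-properties of power}, \ref{prop-properties of C alpha}, and \ref{prop-properties of Nb}.
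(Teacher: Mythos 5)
Your proposal is correct and follows essentially the same route as the paper: establish $0\in B_\alpha$, use the nesting $\mathcal{N}_{b'}\supseteq\mathcal{N}_b$ for downward closedness, and obtain $b_\alpha^*\in B_\alpha$ by a compactness argument (the paper uses exactly your parenthetical Bolzano--Weierstrass version, extracting a convergent subsequence from $\eta^{(n)}\in\mathcal{N}_{b_n}\cap C_\alpha$ and passing to the limit via closedness of $C_\alpha$ and continuity of the $\rho_m$). Your nested-closed-sets phrasing is just a topological restatement of the same argument.
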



Building on these intermediate results, the
existence of an optimal weak FWER-controlling multiple decision size vector
is obtained.
\begin{theo}[(Existence)]
\label{theo-existence} Let $\alpha\in[0,1)$. Then $C_\alpha\cap\mathcal
{N}_{b_\alpha^*} \ne \varnothing$. Furthermore, $\eta\in\mathcal{N}$ is
a weak \mbox{FWER-}$\alpha$ optimal multiple decision size vector if
and only if $\eta\in C_\alpha\cap \mathcal{N}_{b_\alpha^*}$.
\end{theo}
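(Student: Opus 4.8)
The plan is to combine the structural facts already assembled in Propositions \ref{prop-properties of C alpha}--\ref{prop-characterization of B alpha}. The first assertion, $C_\alpha \cap \mathcal{N}_{b_\alpha^*} \ne \emptyset$, is essentially already proved inside Proposition \ref{prop-characterization of B alpha}: there the Bolzano--Weierstrass argument produced an $\mathbf{\eta}_0 \in C_\alpha \cap \mathcal{N}_{b_\alpha^*}$ as the limit of size vectors realizing values $b_n \uparrow b_\alpha^*$. So I would simply invoke that proposition to conclude $b_\alpha^* \in B_\alpha$, which by definition of $B_\alpha$ says precisely that the intersection is nonempty. This disposes of existence.

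For the characterization, recall that (by the reduction carried out before (\ref{optimal FWER size vector}), using the MP property of each component) a weak FWER-$\alpha$ optimal multiple decision size vector is any $\mathbf{\eta} \in \mathcal{N}$ that lies in $C_\alpha$ (feasibility) and maximizes $\sum_{m} \rho_m(\eta_m)$ over $C_\alpha$. Write $V(\mathbf{\eta}) = \frac{1}{M}\sum_{m \in \mathcal{M}} \rho_m(\eta_m)$. The key observation is that $b_\alpha^* = \sup\{b : \mathcal{N}_b \cap C_\alpha \ne \emptyset\}$ is exactly $\sup_{\mathbf{\eta} \in C_\alpha} V(\mathbf{\eta})$: indeed $\mathbf{\eta} \in \mathcal{N}_b \cap C_\alpha$ for some $\mathbf{\eta}$ iff there is a feasible point with $V(\mathbf{\eta}) \ge b$, so $B_\alpha = \{b : b \le \sup_{C_\alpha} V\}$, and combined with Proposition \ref{prop-characterization of B alpha} (which shows the supremum is attained, i.e. $B_\alpha = [0,b_\alpha^*]$) we get $b_\alpha^* = \max_{\mathbf{\eta}\in C_\alpha} V(\mathbf{\eta})$. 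Then for $\mathbf{\eta} \in \mathcal{N}$: if $\mathbf{\eta} \in C_\alpha \cap \mathcal{N}_{b_\alpha^*}$ then $\mathbf{\eta}$ is feasible and $V(\mathbf{\eta}) \ge b_\alpha^* = \max_{C_\alpha} V$, so it attains the max, hence is optimal; conversely if $\mathbf{\eta}$ is optimal then it is feasible, so $\mathbf{\eta} \in C_\alpha$, and $V(\mathbf{\eta}) = \max_{C_\alpha} V = b_\alpha^*$, so $\mathbf{\eta} \in \mathcal{N}_{b_\alpha^*}$. This gives the ``if and only if''.

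The only genuinely delicate point is justifying $b_\alpha^* = \sup_{\mathbf{\eta}\in C_\alpha} V(\mathbf{\eta})$ cleanly; this is really just unwinding the definitions of $\mathcal{N}_b$ and $B_\alpha$ together with monotonicity in $b$, so the main obstacle is bookkeeping rather than any real difficulty. One small subtlety worth a sentence: I should note that Proposition \ref{prop-characterization of B alpha} already guarantees $b_\alpha^* \in B_\alpha$ (the sup is achieved), which is what upgrades ``$V(\mathbf{\eta}) \ge b_\alpha^*$ for no feasible $\mathbf{\eta}$'' into a genuine attained maximum and makes the set $C_\alpha \cap \mathcal{N}_{b_\alpha^*}$ the exact optimal set rather than merely an approximating family; convexity and closedness of both $C_\alpha$ and $\mathcal{N}_{b_\alpha^*}$ (Propositions \ref{prop-properties of C alpha} and \ref{prop-properties of Nb}) are what make this intersection well-behaved, though strictly speaking only the nonemptiness is needed for the statement as phrased.
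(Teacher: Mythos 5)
Your proposal is correct and follows essentially the same route as the paper: nonemptiness is read off from Proposition \ref{prop-characterization of B alpha} (the Bolzano--Weierstrass limit point lies in $C_\alpha \cap \mathcal{N}_{b_\alpha^*}$), and the characterization follows by identifying $b_\alpha^*$ with the attained maximum of $\frac{1}{M}\sum_m \rho_m(\eta_m)$ over $C_\alpha$ and unwinding the definitions of $\mathcal{N}_b$ and $B_\alpha$. If anything, your converse direction is stated more cleanly than the paper's (which phrases it as a contradiction via some $b > b_\alpha^*$ with $\eta_0 \in \mathcal{N}_b$), but the substance is identical.
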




Theorem \ref{theo-existence} guarantees existence of an optimal weak FWER
multiple decision size vector,
but it does not address whether the solution is unique. We present a result
on this issue in the following theorem.
\begin{theo}[(Uniqueness)]
\label{theo-uniqueness}
Let $\alpha\in[0,1)$ and define
$C_\alpha(m) = \{\eta_m \in[0,1]\dvtx \bolds{\eta} \in C_\alpha\}$,
called the $m$th section of $C_\alpha$.
If, for all $m \in\mathcal{M}$, the mapping $\eta_m \mapsto\rho
_m(\eta_m)$
is strictly increasing on $C_\alpha(m)$,
then the optimal weak \mbox{FWER-}$\alpha$ multiple decision
size vector is unique and it is the $\bolds{\eta}^*$ satisfying
$C_\alpha\cap\mathcal{N}_{b_\alpha^*} = \{\bolds{\eta}^*\}$.
\end{theo}


It is easy to see that a sufficient condition for uniqueness
of the optimal size vector is that, for all $m \in\mathcal{M}$,
$\eta_m \in[0, \sup C_\alpha(\eta_m)) \Rightarrow\rho_m(\eta_m)
< 1$.
Nonuniqueness may occur with
nonregular families of densities, for example, uniform or shifted exponential,
where the power of the MP test may equal one even though its size is
still less than
one. It occurs if the decision processes in the MDP do \textit{not} satisfy
the condition that $\forall\eta\in[0,1], \forall m \in\mathcal{M},
A_m(\eta) = \eta$, which is the case with discrete data
or when using nonparametric rank-based test functions
with randomization not permitted.

\subsection{Finding optimal size vector}
\label{subsection-Finding Optimal Sizes}

Generally, without differentiability of the ROC functions as in the
case with discrete distributions, linear or nonlinear programming
methods are needed to obtain the optimal solution. In the case,
however, where the ROC functions are twice-differentiable, the optimal
size vector is in a more explicit form.
\begin{theo}\label{theo-optimal sizes}
Let $\bolds{\Delta}^* = ( \Delta_m^*, m \in\mathcal{M})$
be the MP MDP, and assume that
the ROC functions $\eta_m \mapsto\rho_m(\eta_m)$ are strictly increasing
and twice-differentiable
with first and second derivatives $\rho_m^\prime$ and $\rho
_m^{\prime\prime}$,
respectively. Given $\alpha\in(0,1)$,
the optimal weak \mbox{FWER-}$\alpha$ multiple decision size vector
$\bolds{\eta}^* \equiv\bolds{\eta}_{\bolds{\Delta
}^*}^*(\alpha) = (\eta_m^*(\alpha),
m \in\mathcal{M})$ is the $\bolds{\eta} \in\mathcal{N}$ satisfying
\textup{(i)} for some $\lambda\in\Re_+$, $\forall m \in\mathcal{M},
\rho^\prime_m(\eta_m)(1-\eta_m) = \lambda$ and
\textup{(ii)} $\sum_{m \in\mathcal{M}} \log(1-\eta_m) = \log(1-\alpha)$.
%
%
\end{theo}


A question arises as to whether the optimal sizes are monotonic in
$\alpha$.
Such a property is desirable since it will imply that if at
FWER size $\alpha_1$ we have $\delta_m(\eta_m(\alpha_1)) = 1$,
then at an FWER size $\alpha_2$ with $\alpha_2 > \alpha_1$,
we will also have $\delta_m(\eta_m(\alpha_2)) = 1$.
This property will also be critical in proving a martingale
property needed for the development of the FDR-controlling procedure.
This issue is the content of the following proposition.
\begin{prop}
\label{prop-monotonicity of optimal solution}
Assume the conditions of Theorem \ref{theo-optimal sizes}. Then, for each
$m \in\mathcal{M}$, the mapping $\alpha\mapsto\eta_m^*(\alpha)$ is
nondecreasing and continuous.
\end{prop}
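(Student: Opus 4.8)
The plan is to exploit the Lagrange characterization of the optimal vector $\mathbf{\eta}^*(\alpha)$ supplied by Theorem~\ref{theo-optimal sizes}, reparametrizing the optimal sizes through the common Lagrange multiplier. Define, for each $m \in \mathcal{M}$, the function $g_m(\eta) = \rho_m^\prime(\eta)(1-\eta)$ on $[0,1]$. By Proposition~\ref{prop-properties of power}, $\rho_m$ is concave, so $\rho_m^\prime \ge 0$ is nonincreasing, and since $\rho_m$ is strictly increasing (an assumption carried over from Theorem~\ref{theo-optimal sizes}) we must in fact have $\rho_m^\prime > 0$ on $[0,1)$, for otherwise $\rho_m^\prime$ would vanish on a right-neighbourhood of $1$. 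Twice-differentiability makes $g_m$ continuous. I would then argue that $g_m$ is strictly decreasing: it is nonincreasing, being a product of nonnegative nonincreasing factors, and it cannot be constant, say equal to $c>0$, on a subinterval of $[0,1)$, since that would force $\rho_m^\prime(\eta) = c/(1-\eta)$ there, a strictly increasing function, contradicting that $\rho_m^\prime$ is nonincreasing. As $g_m(1) = 0$, the map $g_m$ is a continuous strictly decreasing bijection of $[0,1]$ onto $[0,\rho_m^\prime(0)]$; let $g_m^{-1}$ be its inverse, continuous and strictly decreasing, extended by $g_m^{-1}(\lambda) = 0$ for $\lambda \ge \rho_m^\prime(0)$ so that it becomes a nonincreasing continuous map on $[0,\infty)$.

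Next, equation (\ref{first cond}) says precisely that the (unique, by Theorem~\ref{theo-uniqueness}) optimal vector satisfies $\eta_m^*(\alpha) = g_m^{-1}(\lambda(\alpha))$ for every $m$, where $\lambda(\alpha) := \rho_m^\prime(\eta_m^*(\alpha))(1-\eta_m^*(\alpha)) \ge 0$ does not depend on $m$ and, like $\mathbf{\eta}^*(\alpha)$, is uniquely determined by $\alpha$. Feeding this into the budget constraint (\ref{second cond}) yields $\Phi(\lambda(\alpha)) = \log(1-\alpha)$, where $\Phi(\lambda) := \sum_{m \in \mathcal{M}} \log\big(1 - g_m^{-1}(\lambda)\big)$. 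I would check that $\Phi$ is continuous and strictly increasing on the interval of multiplier values that can occur (for the interior solutions treated by Theorem~\ref{theo-optimal sizes}, $\lambda < \min_m \rho_m^\prime(0)$; in general $0 < \lambda < \max_m \rho_m^\prime(0)$), because there each $\lambda \mapsto g_m^{-1}(\lambda)$ is nonincreasing and at least one is strictly decreasing, with $\Phi(\lambda) \to -\infty$ as $\lambda \downarrow 0$. Hence $\lambda(\alpha) = \Phi^{-1}(\log(1-\alpha))$ with $\Phi^{-1}$ continuous and strictly increasing; composing with the continuous strictly decreasing map $\alpha \mapsto \log(1-\alpha)$ shows $\alpha \mapsto \lambda(\alpha)$ is continuous and strictly decreasing on $(0,1)$.

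The conclusion then follows by one more composition: $\eta_m^*(\alpha) = g_m^{-1}(\lambda(\alpha))$ is the composition of the nonincreasing continuous map $g_m^{-1}$ with the strictly decreasing continuous map $\lambda(\cdot)$, hence nondecreasing and continuous in $\alpha$, as claimed (and in fact strictly increasing on the range of $\alpha$ for which $\eta_m^*(\alpha) > 0$). A parallel route, closer to the proof of Theorem~\ref{theo-optimal sizes}, applies the implicit function theorem to the system (\ref{first cond})--(\ref{second cond}) in $(\mathbf{\eta},\lambda)$ with parameter $\alpha$: the pertinent Jacobian is, up to sign, the bordered Hessian already shown there to be nonsingular (its diagonal entries $\rho_m^{\prime\prime}(\eta_m) - \lambda/(1-\eta_m)^2$ are strictly negative), so $(\mathbf{\eta}^*(\alpha),\lambda(\alpha))$ is $C^1$; differentiating (\ref{second cond}) gives $\sum_m (1-\eta_m^*)^{-1}\, d\eta_m^*/d\alpha = (1-\alpha)^{-1} > 0$, while differentiating (\ref{first cond}) shows each $d\eta_m^*/d\alpha$ has the sign of $-d\lambda/d\alpha$, so combining the two forces that common sign to be positive. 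The main work in either approach is the structural fact that each $g_m$ is continuous and strictly decreasing --- extracting strict monotonicity from concavity together with strict monotonicity of $\rho_m$ --- plus the bookkeeping for the boundary regime in which some $\eta_m^*(\alpha)$ equals $0$ for all small $\alpha$ (there the statement is genuinely only ``nondecreasing''); the extended inverse $g_m^{-1}$ is tailored to absorb that case.
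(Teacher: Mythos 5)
Your argument is correct and follows essentially the same route as the paper's proof: both hinge on the function $g_m(\eta)=\rho_m^\prime(\eta)(1-\eta)$ being nonincreasing, so that the constraint (\ref{second cond}) forces the common multiplier $\lambda(\alpha)$ to decrease as $\alpha$ increases, whence no $\eta_m^*(\alpha)$ can decrease. Your explicit inversion through $\Phi(\lambda)=\sum_{m}\log\bigl(1-g_m^{-1}(\lambda)\bigr)$ actually supplies a more complete justification of the continuity claim than the paper's one-line appeal to the continuity of $\rho_m^\prime$.
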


\subsection{Gaussian example for weak FWER control}
\label{subsection-Concrete Examples}

For $m \in{\mathcal{M}}$, let $X_m \sim N(\mu_m,\sigma_{m0}^2)$,
where the
$\mu_m$'s are unknown and $\sigma_{m0}^2$'s are known. Consider the
multiple hypotheses
testing problem
%
$H_{m0}\dvtx \mu_m = \mu_{m0}$ and $H_{m1}\dvtx \mu_m = \mu_{m1}$
%
with $\mu_{m0} < \mu_{m1}$ for $m \in\mathcal{M}$. The MP
test of size $\eta_m$ for $H_{m0}$ versus $H_{m1}$ is\vspace*{1pt}
%
$\delta_m^{*}(X_m;\eta_m) \equiv\delta_m^{*}(\eta_m) =
I\{X_m \ge\mu_{m0} + \sigma_{m0} \Phi^{-1}(1 - \eta_m)\}$,
%
where $\Phi(\cdot)$ and $\Phi^{-1}(\cdot)$ are the cumulative distribution
and quantile functions, respectively, of a standard normal variable.
The $m$th effect size is
%
$\gamma_m = {(\mu_{m1}-\mu_{m0})}/{\sigma_{m0}}$,
%
and the ROC function of the decision process
$\Delta_m^* = (\delta_m^{*}(\eta_m)\dvtx \eta_m \in[0,1])$
is
%
$\rho_m(\eta_m) \equiv\rho_m(\eta_m;\gamma_m) = \Phi(\gamma_m -
\Phi^{-1}(1-\eta_m))$,
%
clearly twice-differentiable with respect to $\eta_m$.
With $\phi(\cdot)$ 
the standard normal density function,
%
%
\[
(\rho_m)^\prime(\eta_m) =
\frac{\phi(\gamma_m - \Phi^{-1}(1-\eta_m))}{\phi(\Phi
^{-1}(1-\eta_m))}.
\]
%
%
For fixed $\alpha\in(0,1)$ and $\gamma_m$'s,
consider the mappings $d \mapsto\eta_m(d),
m \in\mathcal{M}$, defined
implicitly by the equation
%
%
\begin{equation}\label{condition 1 normal}
\frac{\phi(\gamma_m - \Phi^{-1}(1-\eta_m))}{\phi(\Phi
^{-1}(1-\eta_m))}(1-\eta_m) - d = 0.
\end{equation}
The optimal value of $d$, denoted by $d^*$, solves the equation
%
%
\begin{equation}\label{condition 2 normal}
\sum_{m \in\mathcal{M}} \log\bigl(1-\eta_m(d)\bigr) - \log(1-\alpha) = 0.
\end{equation}
The optimal sizes of the $M$ MP tests are then $\eta_m(d^*),
m \in\mathcal{M}$. An \texttt{R} \cite{IhaGen96}
implementation of this numerical problem first defines
$v_m = 1 - \Phi^{-1}(1-\eta_m)$, so condition (\ref{condition 1
normal}) amounts to
solving for $v_m = v_m(d)$ the equation
%
%
\begin{equation}\label{equation in vm}
\log\Phi(v_m) + \gamma_mv_m - \log(d) - \gamma_m^2/2 = 0.
\end{equation}
We utilized a Newton--Raphson iteration in solving for $v_m$'s in (\ref{equation
in vm}) and the
\texttt{uniroot} routine in the \texttt{R} Library to solve for $d$ in (\ref
{condition 2 normal}).
Upon obtaining $v_m(d)$'s, the $\eta_m(d)$'s are computed via
$\eta_m(d) = 1 - \Phi(v_m(d))$.

Figure \ref{figure-normal test and uniform effect} demonstrates the optimal
sizes when $M = 2\mbox{,}000$ and for uniformly distributed effect sizes.
Observe from the second panel that when the effect size is small, which
converts to low
power, then the optimal size for the test is also small, but also
note that when the effect size is large, which converts to high
power, then the optimal test size is also small. For the tests with
moderate effect sizes or power, then the optimal sizes are higher.
This behavior could also be seen by looking at the third panel in
the figure which shows the achieved power of the tests at the
optimal sizes.

%
\begin{figure}

\includegraphics{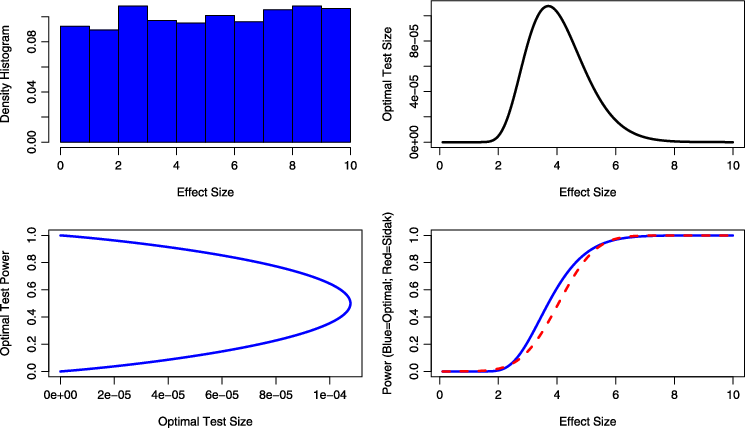}

\caption{Optimal test sizes and powers for $2\mbox{,}000$ MP tests of hypotheses
under normality when the effect sizes
were generated from a $\operatorname{uniform}[0.1,10]$ distribution.
Panel four shows the
powers for both the optimal [solid black] and the {\v{S}}id{\'a}k
[dashed red]
tests with respect to effect sizes.}
\label{figure-normal test and uniform effect}
\end{figure}

The efficiency of the optimal procedure
relative to the {\v{S}}id{\'a}k procedure was measured via the
ratio (multiplied by 100) of the average power over the $M$ tests,
defined by
$\sum_{m\in\mathcal{M}} \rho_m(\eta_m)/M$, of
the optimal procedure and the average power of the {\v{S}}id{\'a}k procedure.
The fourth panel in Figure \ref{figure-normal test and uniform
effect} depicts the
powers of the resulting tests versus the effect size for both
procedures (solid blue${}={}$optimal; dashed red${}={}${\v{S}}id{\'a}k). For
these uniformly-generated
effect sizes, the efficiency of the optimal procedure over the {\v
{S}}id{\'a}k
is 103.5\%. This efficiency is affected by the vector of effect sizes. For
instance, when we change the effect sizes in Figure
\ref{figure-normal test and uniform effect} to be generated from a
uniform over $[0.1,2]$, then the efficiency jumps to 181.7\%, though
it should also be pointed out that since the effect sizes are small,
then the overall powers of both procedures are also small.

\subsection{A size-investing strategy}
\label{subsection-Investing}

In the preceding Gaussian example, as well as in other situations
we examined, for example, with exponential and Bernoulli distributions,
we observed the phenomenon where, among
the $M$ tests, those with low powers (small effect sizes) and those
with high powers
(large effect sizes) are allocated
relatively small sizes in the weak FWER-controlling optimal procedure.
The tests
with larger sizes are those with moderate powers or effect
sizes. This is a \textit{size-investing strategy} in the multiple hypotheses
testing problem, and it has intuitive content.
%
With the overall goal of making more real discoveries while controlling
the proportion of false discoveries for a pre-specified, usually small,
overall size~$\alpha$, the optimal procedure dictates that not much
size should be accorded those tests with either very low or very high
powers. The former case will not lead to any discoveries anyway if the
size that could be allocated is small, while the latter case will lead
to discoveries even if the test sizes are made small. Thus, there is
more to be gained by investing larger sizes on those tests that are of
moderate powers, and an appropriate tweaking of their test sizes
according to condition (i) in Theorem~\ref{theo-optimal sizes} improves
the ability to achieve more real discoveries. However, this phenomenon
is dependent on the magnitude of the overall size. If this overall size
is made larger, more leeway ensues to the extent that it may then be
more beneficial to allocate more size to those with low powers since
those tests with moderate powers, when they had small sizes, may now
have larger powers because of the consequent increase in their sizes.
The precise and crucial determinant of where the differential sizes
should be allocated are the rates of change of the ROC functions, with
some size-attenuation. Interesting discussions of size and weight
allocation strategies can also be found in \cite{WesKriYou98}, where
the size allocation was related to the ``$\alpha$-spending'' function
of \cite{LandeM83}, in \cite{FosSti08} which deals with
$\alpha$-investing in sequential procedures that control expected false
discoveries, and in \cite{GenRoeWas06,RoqWie09} which discuss optimal
weights for the $p$-values.

A tangential
real-life manifestation of this strategy occurred during the 2008
American presidential election, with the total resources (financial,
manpower, etc.)
available to the
candidates analogous to the overall size in the multiple testing problem.
In the waning days of the campaign, the major
candidates, then-Senator Barack Obama of the
Democratic Party and Senator John McCain
of the Republican Party,
focused their campaign efforts, in terms of allocating their financial
and manpower resources,
in the ``battleground states'' of North Carolina, Virginia and
Pennsylvania, while
basically ignoring the ``in-the-bag states''
of South Carolina, then expected to vote for McCain, and California,
then expected to vote for Obama. Also, by
virtue of the deep resources of the Obama campaign, it was able to
allocate more
resources \textit{even} in states that traditionally voted Republican,
whereas the
McCain campaign, with a relatively smaller war chest, had to ``drop''
some states
(e.g., Michigan) in their campaign. The behaviors of the two camps
somehow mirror the size-investing strategy with proper accounting of each
campaign's overall resources.

\section{Restrictions, extensions and connections}
\label{section-Restrictions}
%
\subsection{On the restriction to $\mathcal{D}_0$}
\label{subsection-Restricting to D0}

The optimization problem for weak FWER control could be construed as
limited since we restricted to the subclass $\mathcal{D}_0$ thus
leading to an optimal weak FWER-controlling procedure that is still
simple. In \mbox{\cite{Sto07,SunCai07}}, it was demonstrated that
performance is enhanced via compound MDFs. Examples of compound MDFs
are the \textit{estimated} optimal discovery procedure (ODP) in
\cite{Sto07,StoDaiLee07}, the FDR-controlling procedure in
\cite{BenHoc95}, and the oracle-based adaptive MDFs in \cite{SunCai07}.

Could we immediately start from
compound MDFs in the search for an
optimal weak FWER-controlling compound
MDF? Let us suppose that
$\bolds{\delta} = (\delta_m\dvtx m \in\mathcal{M})$ is a compound MDF,
so $\delta_m$ depends on $({X},{U})$ and not only
on $(X_m,U_m)$. For such an MDF, we have
%
%
\begin{equation}\label{RHS1}
R_{0}(\bolds{\delta},Q) =
\Prr_{Q}\biggl\{\bigcup_{m \in\mathcal{M}_0(Q)} [
\delta_m({X},{U}) = 1 ] \biggr\}.
\end{equation}
Now, even if the independence Condition \ref{condI} holds,
$(\delta_m({X},{U})\dvtx m \in\mathcal{M}_0(Q))$
need not be an independent collection. As such
no closed-form \textit{exact} expression for
$R_{0}(\bolds{\delta},Q)$ need exist.
The right-hand side in (\ref{RHS1}) could be Bonferroni-bounded by
%
%
\begin{equation}
\label{EFP}
\operatorname{EFP}(\bolds{\delta},Q) \equiv\sum_{m \in\mathcal{M}_0(Q)}
\alpha_{\delta_m}(Q),
\end{equation}
called the expected number of false positives
in \cite{Sto07}. Alternatively,
if a generalized positive quadrant dependence (PQD)
condition holds, with
\[
\Prr_{Q}\biggl\{\bigcap_{m \in\mathcal{M}_0(Q)} [
\delta_m({X},{U}) = 0
]
\biggr\}
\ge
\prod_{m \in\mathcal{M}_0(Q)}\Prr_{Q}\{
\delta_m({X},{U}) = 0
\},
\]
then the right-hand side in (\ref{RHS1}) could be upper-bounded by
%
%
\begin{equation}
\label{PQD upper bound}
\operatorname{PQD}(\bolds{\delta},Q) \equiv
1 - \prod_{m \in\mathcal{M}_0(Q)} [ 1 - \alpha_{\delta_m}(Q) ],
\end{equation}
where
$\alpha_{\delta_m}(Q) = E_{Q} \delta_m({X},{U})$,
the size of $\delta_m$ when $m \in\mathcal{M}_0(Q)$.
For this compound MDF, its MDR is
%
$R_2(\bolds{\delta},Q) = {\sum_{m \in\mathcal{M}_1(Q)} [
1 - \pi_{\delta_m}(Q) ]}$,
%
where $\pi_{\delta_m}(Q) = E_{Q} \delta_m({X},{U})$ is the power
of $\delta_m$ when $m \in\mathcal{M}_1(Q)$.

An optimization approach could proceed by putting an upper
threshold $\alpha\in(0,1)$
on either (\ref{EFP}) or (\ref{PQD upper bound}), and then finding the
$\bolds{\delta}$ that minimizes $R_2(\bolds{\delta},Q)$,
or equivalently, maximizes
%
$\operatorname{ETP}(\bolds{\delta},Q) \equiv
\sum_{m \in\mathcal{M}_1(Q)} \pi_{\delta_m}(Q)$,
%
the latter quantity referred to as the
expected number of true positives in \cite{Sto07}.
The MDFs in \cite{Spj72} and \cite{Sto07} were both obtained
through this program.
The MDF in \cite{Spj72} is
%
%
\begin{equation}
\label{Spjotvoll MDF}
\bolds{\delta}_{\mathrm{SPJ}}(\alpha) =
\mathop{\arg\max}_{\bolds{\delta} \in\mathcal{D}_0}
\{\operatorname{ETP}(\bolds{\delta},Q_1)\dvtx
\operatorname{EFP}(\bolds{\delta},Q_0) \le\alpha
\},
\end{equation}
where $Q_0 \in\mathcal{Q}_0$ and $Q_1 \in\mathcal{Q}_1$;
whereas the optimal discovery procedure (ODP)
in \cite{Sto07} is
%
%
\begin{equation}
\label{Storey ODP}
\bolds{\delta}_{\mathrm{STO}}(\alpha;Q) =
\mathop{\arg\max}_{\bolds{\delta} \in\mathcal{D}}
\{\operatorname{ETP}(\bolds{\delta},Q)\dvtx
\operatorname{EFP}(\bolds{\delta},Q) \le\alpha
\},
\end{equation}
where $Q$ is the true probability measure of ${X}$.
The use of EFP as type I error measure in \cite{Sto07} enabled
a calculus of variations optimization to obtain the ODP.
This has a particularly interesting structure when we
utilize as its input the vector of $p$-value statistics
$(S_m^*(x_m,u_m)\dvtx m \in\mathcal{M})$ from the
MP MDP $\bolds{\Delta}^* = (\Delta_m^*\dvtx m \in\mathcal{M})$
with multiple decision size function $\mathbf{A}_{\bolds{\Delta}^*}^*
= \{(A_m^*(\eta)\dvtx \eta\in[0,1])\dvtx m \in\mathcal{M}\}$
and multiple decision ROC function
$\bolds{\rho}_{\bolds{\Delta}^*}^* = \{(\rho_m^*(\eta) \dvtx
\eta\in[0,1])\dvtx m \in\mathcal{M}\}$
and with $A_m^*(\cdot)$ and $\rho_m^*(\cdot)$ both differentiable
with derivatives $(A_m^*)^\prime(\cdot)$ and $(\rho_m^*)^\prime
(\cdot)$.
The significance thresholding function
$\mathcal{S}\dvtx ([0,1],\sigma[0,1]) \rightarrow(\Re,\sigma(\Re))$
utilized in the ODP becomes
%
%
\begin{equation}
\label{significance thresholding function}
\mathcal{S}(s;Q) = \frac{\sum_{m \in\mathcal{M}_1(Q)}
(\rho_m^*)^\prime(s)}{\sum_{m \in\mathcal{M}_0(Q)} (A_m^*)^\prime(s)},
\end{equation}
a consequence of Lemma 2 in \cite{Sto07} and Proposition \ref
{prop-distributions of p value}.
The ODP $\bolds{\delta}_{\mathrm{STO}} = (\delta_{m,\mathrm{STO}}\dvtx m \in\mathcal
{M})$ has
a single-thresholding structure with components
\[
\delta_{m,\mathrm{STO}}(S_m^*(x_m,u_m);Q) = I\{\mathcal{S}(S_m^*(x_m,u_m);Q)
\ge\lambda\},\qquad
m \in\mathcal{M},
\]
where $\lambda\in[0,\infty)$ is chosen so the size constraint
on $\operatorname{EFP}(\bolds{\delta}_{\mathrm{STO}}(\alpha;Q),Q)$ is
{approximately} satisfied.
Observe that each of these components is {still} of simple-type,
unless $\lambda$ is determined in a data-dependent manner
using the full data $({x},{u})$.
Note also that $\bolds{\delta}_{\mathrm{STO}}$ was derived under complete
knowledge of the unknown $Q$,
or more specifically, the sets $\mathcal{M}_0(Q)$ and $\mathcal
{M}_1(Q)$, as can be seen in
(\ref{significance thresholding function}), hence is referred to as an
oracle MDF. For the simple null versus simple
alternative hypotheses case, the size functions $A_m^*(\cdot)$'s and
the ROC functions
$\rho_m^*(\cdot)$'s will be known, but with composite hypotheses they
will be unknown.
To implement $\bolds{\delta}_{\mathrm{STO}}$, it was proposed in
\cite{Sto07,StoDaiLee07} that these unknown
quantities, sets, functions, or significance thresholding function,
be estimated using the data $({x},{u})$. This will make the estimated
ODP of compound type.
But note that through this plug-in approach the exact optimality property
of the ODP need not anymore hold for the estimated version;
see also \cite{SunCai07,FerFriRueThoKon08}.
In contrast, $\bolds{\delta}_{\mathrm{SPJ}}$ is determined only by the
two classes of extreme probability measures, $\mathcal{Q}_0$ and
$\mathcal{Q}_1$,
so the marginal probability measures, $Q_m$'s, are completely known,
and not by the unknown
true probability measure $Q$ governing $X$. This fact was criticized in
\cite{Sto07} as a ``potentially problematic optimality'' criterion.
More importantly, it should be recognized that both $\bolds{\delta
}_{\mathrm{SPJ}}$ and
$\bolds{\delta}_{\mathrm{STO}}$ need not be
the optimal weak or strong FWER- or FDR-controlling MDFs since
the Bonferroni upper bound for $R_{0}(\bolds{\delta},Q)$ utilized
in their derivations is hardly a sharp upper bound.

The criticism leveled against $\bolds{\delta}_{\mathrm{SPJ}}$ could also
be invoked against our optimal weak FWER-controlling procedure since we also
relied on a criterion determined only by the extreme classes
$\mathcal{Q}_0$ and $\mathcal{Q}_1$. However, note that {each}
component of the
optimal weak FWER-controlling multiple decision size vector, and consequently
each component of $\bolds{\delta}_W^*(\alpha)$, uses {all} of the
$Q_{m0}$'s and $Q_{m1}$'s, analogously to the ODP, though the MDF
$\bolds{\delta}_W^*(\alpha)$ is still neither adaptive nor compound.
Our development of this simple MDF,
which is optimal in the class $\mathcal{D}_0$, is a prelude
to our development of adaptive and compound MDFs \textit{strongly}-controlling
FWER and~FDR. The MDF $\bolds{\delta}_W^*(\alpha)$ will be the anchor
for these FWER and FDR strongly-controlling compound MDFs. These new
MDFs are discussed in Section \ref{section-strong FWER control} for
strong FWER-control and in Section \ref{section-FDR procedures} for
FDR control. Our approach to obtaining these
strongly-controlling MDFs is indirect, whereas
that in \cite{Sto07} is direct.
There is also an intrinsic difference in the problems considered
since our focus is on the type I error risk functions $R_{0}$ and $R_{1}$,
whereas in \cite{Spj72,Sto07} the simpler
type I error metric of EFP was utilized. Looking forward,
though our starting point is the optimal weak FWER-controlling simple MDF
$\bolds{\delta}_W^*(\alpha)$, there
is confidence in the viability of our indirect approach to generate
good MDFs since
we will establish later that both the sequential {\v{S}}id{\'a}k
procedure and the
BH procedure are special cases of our new MDFs under exchangeability.

\subsection{Families with MLR property}
\label{subsection-MLR Families}

The initial simplification to the
simple null versus simple alternative hypotheses for
each $m \in\mathcal{M}$ could be perceived as a limitation
because of the need to know the $Q_{m1}$'s to
determine the ROC functions.
However, this approach, which was also implemented
in \cite{Spj72,Sto07,RoqWie09}, is natural and
historically-justified by the Neyman--Pearson
framework. We surmise that
in this multiple decision problem, the solution to the simple null
versus simple
alternative hypotheses setting will play a prominent role in solving
the composite hypotheses setting, since it appears that for an MDF to
possess optimality, it will require knowledge, either in exact,
approximate, or
estimated forms, of the alternative hypotheses distributions.
We touch on this aspect in the presence of the
monotone likelihood ratio (MLR) property; see \cite{Leh97}.

Suppose that for each $m \in\mathcal{M}$, the density function $q_m$
belongs to a one-dimensional parametric family
$\mathcal{F}_m = \{q_m(\cdot;\xi_m)\dvtx\xi_m \in
\Gamma_m \subset\Re\}$ which possesses the MLR property.
A typical pair of hypotheses to be tested would
be $H_{m0}^*\dvtx \xi_m \le\xi_{m0}$ versus $H_{m1}^*\dvtx \xi_m > \xi_{m0}$,
where $\xi_{m0}$ is known.
With the MLR property, a uniformly most powerful (UMP) test function
$\delta_m(X_m,U_m;\eta_m)$ of size $\eta_m$ exists,
with this UMP test identical to the MP test of
size $\eta_m$ for the simple null hypothesis
$H_{m0}\dvtx \xi_m = \xi_{m0}$ versus
the simple alternative hypothesis $H_{m1}\dvtx \xi_m = \xi_{m1}$,
with $\xi_{m1} > \xi_{m0}$. When dealing with the single-pair
hypothesis testing problem, recall that exact knowledge of the value of
$\xi_{1}$ is not necessary since the critical constants of the
size-$\eta$ MP
test for $H_0\dvtx \xi= \xi_0$ versus $H_1\dvtx \xi= \xi_1$ can be made
independent of
$\xi_{1}$. In contrast, for the multiple decision problem, to determine
the optimal size allocations for each of the $M$ MP tests,
the powers of the tests at the $\xi_{m1}$'s are required, hence
the need to know the values of the $\xi_{m1}$'s.
When $M$ is large, such information may not be so forthcoming.
The default procedure is the simplistic approach of simply
assuming that the $(Q_{m0},Q_{m1})$ is invariant in $m$,
which is the {exchangeable setting}. However, this exchangeable
assumption is most likely wrong as a consequence of varied effect sizes
or different test functions utilized.
See, for instance, \cite{EfrAAS08} for real situations where
exchangeability do not hold.
We propose two possible solutions to this dilemma.

The first approach is to solicit from the scientific investigator the
values of
the $\xi_{m1}$'s for which the powers are of most interest.
Such values may coincide with those that are scientifically
different from the $\xi_{m0}$'s. Such elicitation, which may not be
very feasible in practice if $M$ is large, but which may be made
possible by
forming subclasses or clusters of the $M$ genes as in \cite{EfrAAS08},
amounts to specifying {effect sizes}. Formation of such
clusters must be made in close consultation with the investigator, or perhaps
guided by the result of a preliminary cluster analysis using data
independent of that used in the decision functions.
For the specified $\xi_{m1}$'s, the ROC functions in the determination
of the optimal
weak FWER-controlling multiple size vector become $\rho_m(\eta) =
\pi_{\delta_m^*(\eta)}(\xi_{m1})$ for $m \in\mathcal{M}$,
where $\delta_{m}^*(\eta)$ is the simple MP test of size $\eta$ for testing
$H_{m0}\dvtx \xi_m = \xi_{m0}$ versus $H_{m1}\dvtx \xi_m = \xi_{m1}$, and
$\pi_{\delta_{m}^*(\eta)}(\xi_{m1})$ is the power of $\delta
_{m}^*(\eta)$
(at $\xi_m = \xi_{m1}$). In the clustered situation with $\mathcal
{M} = \biguplus_{k=1}^K
\mathcal{M}_k$, we may denote by $\bar{\rho}_k(\eta)$ and $\zeta
_k$, respectively,
the common ROC function and size for the decision functions in cluster
$\mathcal{M}_k$.
Under second-order differentiability of $\bar{\rho}_k(\eta)$'s, by
Theorem \ref{theo-optimal sizes}, the optimal weak FWER-$\alpha$ controlling
multiple size vector $\zeta(\alpha) = (\zeta_1(\alpha),\zeta
_2(\alpha),\ldots,
\zeta_K(\alpha))$ is the $\zeta= (\zeta_1,\zeta_2,\ldots,\zeta
_K)$ that
solves the set of equations
$\forall k=1,2,\ldots,K\dvtx\bar{\rho}_k^\prime(\zeta_k)(1 -
\zeta_k) = \lambda$
for some $\lambda\in\Re_+$ with
$\sum_{k=1}^K |\mathcal{M}_k| \log(1 - \zeta_k) = \log(1-\alpha)$.

The second approach, analogous to those in
\cite{WesKriYou98,RubDudLaa06,SunCai07,Sto07,StoDaiLee07,KanYeLiuAllGao09}
is to estimate or approximate the underlying values of the $\xi_m$'s
either using the observed data $x$, possibly via shrinkage-type estimators,
or through the use of prior information which could be informed by external
covariates as in \cite{FerFriRueThoKon08}.
Addressing this same restriction of requiring knowledge of the simple
null and simple
alternative hypotheses and advocating this second approach,
\cite{RoqWie09}, page 679, stated:
``although leading to oracle procedures, it can be used in practice as
soon as the null
and alternative distributions are estimated or guessed reasonably
accurately from
independent data.''
By ``independent data'' is meant in \cite{RoqWie09} as data different
from that
used in performing the actual tests. However, such {external} data
need not always be used for estimating
or imputing the unknown parameters. For example, suppose that for
each $m \in\mathcal{M}$, data $x_m$ could be partitioned into $(v_m,w_m)$.
We may then use $\tilde{\xi}_m(v_m) = \max\{\xi_{m0},\hat{\xi
}_m(v_m)\}$,
where $\hat{\xi}_m(v_m)$
is the maximum likelihood estimate of $\xi_m$ based on $v_m$, and proceed
as in the preceding paragraph with $\xi_{m1}$ set to $\tilde{\xi
}_{m}(v_m)$ for each
$m \in\mathcal{M}$, and with the component data $w_m$ used in the
test functions.
The resulting MDF will be of an adaptive type,
possibly also compound as in \cite{SunCai07}
if shrinkage estimators are used for estimating the $\xi_m$'s using the
$v_m$ components.
Observe that if for some $m_0 \in\mathcal{M}$,
$\tilde{\xi}_{m}(v_{m_0})$ and $\xi_{{m_0}0}$ are very close or identical,
then a relatively small size will be allocated to the MP test for
component $m_0$.
This amounts to downgrading the testing problem for this component,
a fact of importance since a criticism of multiple hypotheses testing,
especially
when using FDR, is that an unscrupulous investigator may keep adding
irrelevant genes. When using the adaptive MDF arising from the optimal
multiple decision size vector, this investigator's strategy will backfire
since the adaptive MDF will automatically downgrade the irrelevant genes.
This second approach still requires deeper study.
For instance, there is the issue of how to partition
each $x_m$ into the $v_m$ and $w_m$ components. Furthermore, the impact
of a
misspecified $\xi_{m1}$, possibly arising from the estimation
procedure, needs
to be ascertained.

\subsection{Connections to $p$-value statistics}
\label{subsection-In Terms of P-Values}

Proposition \ref{prop-distributions of p value} indicates that the ROC function
$\eta\mapsto\rho_m(\eta)$ is differentiable if and only if
the distribution function of the $p$-value statistic
$S_m(X_m,U_m)$ under $H_{m1}\dvtx Q_m = Q_{m1}$
is differentiable. In this case, $\rho_m^\prime(\cdot)$ coincides with
$h_m(\cdot)$, the density function of
$S_m(X_m,U_m)$ under $H_{m1}\dvtx Q_m = Q_{m1}$.
Condition (i) in Theorem \ref{theo-optimal sizes} is
equivalent to the constancy in $m$ of $h_m(\eta_m)(1-\eta_m)$.
%
%
This is surprising since it indicates that it is \textit{not} enough to simply
find the sizes that maximize these $h_m(\cdot)$'s,
as dictated by the Neyman--Pearson lemma when dealing
with a single pair of null and alternative hypotheses.
Rather, in the multiple hypotheses testing scenario,
there is attenuation in that larger sizes
incur penalties. Condition (i) in Theorem \ref{theo-optimal sizes}
governs the interactions among the $M$ tests regarding their size allocations
to achieve the best overall result, in terms of overall type II error,
among themselves.

The optimal weak FWER-controlling MDF
can be converted to a procedure based on the $p$-value statistics.
If $\bolds{\eta}^*(\alpha) = (\eta_m^*(\alpha), m \in\mathcal{M})$
is the optimal weak FWER-$\alpha$ multiple decision
size vector and $(S_m(x_m,u_m), m \in\mathcal{M})$
is the vector of computed $p$-value statistics, the decision based on
data $(x,u) = ((x_m,u_m), m \in\mathcal{M})$ is
$\delta^*(x,u) = (I\{S_m(x_m,u_m) \le\eta_m^*(\alpha)\}, m \in
\mathcal{M})$,
an MDF based on weighted $p$-values. This is
related to the approach in several papers using weighted $p$-values
such as \cite{GenRoeWas06,WasRoe06,RubDudLaa06,KanYeLiuAllGao09,RoqWie09}.
In our case, the weights are tied-in to the optimal sizes.

\section{\textit{Strong} FWER control}
\label{section-strong FWER control}

Let $\bolds{\Delta}^* =
(\Delta_m^*, m \in\mathcal{M})$ be the MP MDP with
$\Delta_m^* = (\delta_m^*(\eta)\dvtx \eta\in[0,1])$ the MP decision process
for $H_{m0}\dvtx Q_m = Q_{m0}$ versus $H_{m1}\dvtx Q_m = Q_{m1}$
based on $(X_m,U_m)$.
Wlog, assume that the size function $A_m(\cdot)$ of
$\Delta_m^*$ satisfies $A_m(\eta) = \eta$. Define
$\bolds{\eta}\dvtx [0,1] \rightarrow[0,1]^M$ such that
$\bolds{\eta}(\alpha) = (\eta_m(\alpha), m \in\mathcal{M})$ is
the optimal weak
FWER-controlling multiple decision size vector at level $\alpha$.
Assume that each component of this mapping is nondecreasing
and continuous, which is the case when the ROC functions
of $\bolds{\Delta}^*$ are twice-differentiable
as established in Proposition \ref{prop-monotonicity of optimal solution}.

For a weak FWER threshold
of $\alpha\in[0,1]$, the optimal MDF in $\mathcal{D}_0$ is
%
$\bolds{\delta}_W^*(\alpha) = (\delta_m^*(\eta_m(\alpha)), m \in
\mathcal{M})$,
%
as given in (\ref{optimal weak FWER controlling MDF}).
Associated with this MDF is the \textit{generalized} multiple decision
$p$-value statistic
$\mathbf{W} = (W_m, m \in\mathcal{M})$, where
%
%
\begin{equation}
\label{generalized p-value}
W_m \equiv W_m(X_m,U_m)
= \inf\{\alpha\in[0,1]\dvtx \delta_m^*(\eta_m(\alpha)) = 1\}.
\end{equation}
The $w_m = W_m(x_m,u_m)$ is the smallest weak FWER size leading to
rejection of $H_{m0}$ when using $\bolds{\delta}_W^*(\alpha)$ given data
$(x,u) = ((x_m,u_m), m \in\mathcal{M})$. The
usual $p$-value statistic $S_m$ [see (\ref{defn-p value statistic})]
for $\delta_m^*$ is related to $W_m$ via
%
%
\begin{equation}
\label{p value relationships}
\forall m \in\mathcal{M}\dvtx S_m(X_m,U_m) = \eta_m(W_m(X_m,U_m)).
\end{equation}
%
%
%

Now, a l\'a \cite{Sto07,SunCai07},
suppose an Oracle knows $Q$, the true
underlying probability measure of $X$. For the MDF $\bolds{\delta
}_W^*(\alpha)$,
its FWER is
\[
R_{0}(\bolds{\delta}_W^*(\alpha),Q) =
1 - \prod_{m \in\mathcal{M}} [1 - \eta_m(\alpha)]^{1-\theta_m(Q)}.
\]
This is nondecreasing and continuous in $\alpha$ since
the mappings $\alpha\mapsto\eta_m(\alpha)$
for each $m \in\mathcal{M}$ are nondecreasing and continuous.
If the Oracle desires to control this type I error rate at a value $q^*
\in[0,1]$ and
also minimize the MDR given by
%
$R_2(\bolds{\delta}_W^*(\alpha),Q) = |\mathcal{M}_1(Q)| -
\sum_{m \in\mathcal{M}_1(Q)} \rho_m(\eta_m(\alpha))$,
%
where $\rho_m(\eta_m(\alpha))$ is the power of $\delta_m^*(\eta
_m(\alpha))$,
then she should choose the largest $\alpha\in[0,1]$
such that $R_{0}(\bolds{\delta}_W^*(\alpha),Q) = q^*$.
Owing to the continuity and nondecreasing properties
of $R_{0}(\bolds{\delta}_W^*(\alpha),Q)$ in $\alpha$,
the Oracle's optimal $\alpha$ could also be expressed via
\[
\alpha^\dagger(q^*;Q) =
\inf\biggl\{\alpha\in[0,1]\dvtx \prod_{m \in\mathcal{M}}
[1 - \eta_m(\alpha)]^{1 - \theta_m(Q)} < 1 - q^*\biggr\}.
\]
%

However, there is no Oracle and
$Q$ is not known, else there is no multiple
decision problem. Thus, $\alpha^\dagger(q^*;Q)$ is not
observable. A natural idea is to estimate
the unknown $\theta_m(Q)$, the state of the $m$th pair of hypotheses.
An intuitive and simple estimator of $\theta_m(Q)$ for a fixed value of
$\alpha$ is
%
%
\begin{equation}
\label{estimator of thetam(Q)}
\widehat{\theta}_m(Q) = \delta_m^*(\eta_m(\alpha)-) \equiv
\delta_m^*(X_m,U_m;\eta_m(\alpha)-).
\end{equation}
In turn, we obtain a \textit{step-down} estimator
$\alpha^\dagger(q^*) \equiv\alpha^\dagger(X,U;q^*)$
of the Oracle-based $\alpha^\dagger(q^*;Q)$ given by
%
%
\begin{equation}
\label{alpha dagger}
\alpha^\dagger(q^*) =
\inf\biggl\{\alpha\in[0,1]\dvtx \prod_{m \in\mathcal{M}}
[1 - \eta_m(\alpha)]^{1 - \delta_m^*(\eta_m(\alpha)-)} < 1 -
q^*\biggr\}.
\end{equation}
This determines a compound
MDF $\bolds{\delta}_S^*(q^*) \equiv\bolds{\delta}_S^*(X,U;q^*)
\in\mathcal{D}$,
where
%
%
\begin{equation}
\label{FWER strong MDF}
\bolds{\delta}_S^*(q^*) =
\bigl(\delta_m^*(\eta_m(\alpha^\dagger(q^*))), m \in\mathcal{M}\bigr).
\end{equation}
By virtue of the optimal choice of the $\eta_m(\alpha)$'s
and the use of the MP tests, we expect $\bolds{\delta}_S^*(q^*)$
to possess excellent, if not optimal, MDR-properties.
By taking the infimum over the weak FWER-size $\alpha$
coupled with the estimation of $\theta_m(Q)$
by $\delta_m^*(\eta_m(\alpha)-)$ in (\ref{alpha dagger}),
there occurs an adaptive downweighting of
components whose $H_{m0}$'s are most likely correct as dictated by
the data $(x,u)$.
Theorem \ref{theo-strong FWER} below establishes that
$\bolds{\delta}_S^*(q^*)$ in (\ref{FWER strong MDF})
does strongly control the FWER.
\begin{theo}
\label{theo-strong FWER}
Let $q^* \in[0,1]$. Then, $\forall Q \in\mathcal{Q}$,
$R_{0}(\delta_S^*(q^*),Q) \le q^*$.
\end{theo}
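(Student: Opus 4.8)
The plan is to fix an arbitrary $Q \in \mathcal{Q}$ — we may assume $\mathcal{M}_0(Q) \neq \emptyset$ and $q^* < 1$, the remaining cases being trivial since then $R_{01} \equiv 0$ or $R_{01} \le 1 = q^*$ — and to bound the probability of a false rejection by controlling the smallest generalized $p$-value among the true nulls. Throughout I write $A = \alpha^\dagger(q^*)$ for the random step-down threshold of (\ref{alpha dagger}), $W^0 = \min_{m \in \mathcal{M}_0(Q)} W_m$, and $\Psi_Q(\alpha) = R_{01}(\mathbf{\delta}_W^*(\alpha),Q) = 1 - \prod_{m \in \mathcal{M}_0(Q)}[1-\eta_m(\alpha)]$ for the weak-FWER curve of the oracle MDF at $Q$. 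Since each $\alpha \mapsto \eta_m(\alpha)$ is nondecreasing and continuous (the standing hypothesis of this section) with $\eta_m(0)=0$ and $\eta_m(1)=1$, the function $\Psi_Q$ is continuous and nondecreasing with $\Psi_Q(0)=0$ and $\Psi_Q(1)=1$, so $\beta^* \equiv \alpha^\dagger(q^*;Q)$ satisfies $\{\alpha : \Psi_Q(\alpha) \le q^*\} = [0,\beta^*]$ and $\Psi_Q(\beta^*) = q^*$.

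First I would record two elementary facts about the MP decision processes. Because $\eta \mapsto \delta_m^*(X_m,U_m;\eta)$ is $\{0,1\}$-valued, nondecreasing and right-continuous and $\alpha \mapsto \eta_m(\alpha)$ is nondecreasing and continuous, the set $\{\alpha \in [0,1] : \delta_m^*(\eta_m(\alpha)) = 1\}$ is of the form $[W_m,1]$; in particular $\delta_m^*(\eta_m(\alpha)) = 1 \iff \alpha \ge W_m$. Moreover, for $\alpha < W_m$ we have $\eta_m(\alpha) \le \eta_m(W_m) = S_m$ by monotonicity and (\ref{p value relationships}), so no $\eta < \eta_m(\alpha)$ can satisfy $\delta_m^*(\eta) = 1$, whence $\delta_m^*(\eta_m(\alpha)-) = 0$.

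The core step is an event inclusion. Let $E = \bigcup_{m \in \mathcal{M}_0(Q)} \{\delta_m^*(\eta_m(A)) = 1\}$, so that $R_{01}(\mathbf{\delta}_S^*(q^*),Q) = \Pr_Q(E)$. If $\delta_m^*(\eta_m(A)) = 1$ for some $m \in \mathcal{M}_0(Q)$ then $A \ge W_m \ge W^0$, hence $E \subseteq \{A \ge W^0\}$. By the definition of $A$ as an infimum in (\ref{alpha dagger}), on $\{A \ge W^0\}$ one has $\prod_{m \in \mathcal{M}}[1-\eta_m(\alpha)]^{1-\delta_m^*(\eta_m(\alpha)-)} \ge 1-q^*$ for every $\alpha < W^0$; for such $\alpha$ each $m \in \mathcal{M}_0(Q)$ satisfies $\alpha < W^0 \le W_m$, hence its exponent equals $1$ by the second fact above, so after discarding the remaining factors (all lying in $[0,1]$) this product is at most $\prod_{m \in \mathcal{M}_0(Q)}[1-\eta_m(\alpha)] = 1 - \Psi_Q(\alpha)$. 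Thus $\Psi_Q(\alpha) \le q^*$ for all $\alpha < W^0$, and continuity of $\Psi_Q$ forces $\Psi_Q(W^0) \le q^*$, i.e. $W^0 \le \beta^*$; therefore $\Pr_Q(E) \le \Pr_Q\{W^0 \le \beta^*\}$. The step I expect to be the main obstacle is precisely this one — recognizing that below $W^0$ the data-driven product dominates the oracle product restricted to the true nulls, because no true-null factor has yet been discarded by the estimator $\widehat{\theta}_m$ while the surviving alternative factors can only shrink the product — together with the right/left-continuity bookkeeping relating $W_m$, $\delta_m^*(\eta_m(\alpha))$ and $\delta_m^*(\eta_m(\alpha)-)$.

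It then remains to bound $\Pr_Q\{W^0 \le \beta^*\}$ by $q^*$, and here it is crucial that $\beta^*$ is deterministic (a function of $Q$ and $q^*$ only). Under Condition (I) and the independence of the randomizers $U_m$, the variables $W_m = W_m(X_m,U_m)$, $m \in \mathcal{M}_0(Q)$, are independent, so $\Pr_Q\{W^0 \le \beta^*\} = 1 - \prod_{m \in \mathcal{M}_0(Q)} \Pr_{Q_{m0}}\{W_m > \beta^*\}$. Using the first fact and the size-validity $A_{\Delta_m^*}(\eta) = \eta$ assumed in this section, $\Pr_{Q_{m0}}\{W_m \le \beta^*\} = \Pr_{Q_{m0}}\{\delta_m^*(\eta_m(\beta^*)) = 1\} = E_{Q_{m0}}\delta_m^*(\eta_m(\beta^*)) = \eta_m(\beta^*)$. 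Hence $\Pr_Q\{W^0 \le \beta^*\} = 1 - \prod_{m \in \mathcal{M}_0(Q)}[1-\eta_m(\beta^*)] = \Psi_Q(\beta^*) = q^*$, and combining with the inclusion above gives $R_{01}(\mathbf{\delta}_S^*(q^*),Q) \le q^*$, which closes the argument.
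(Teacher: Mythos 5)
Your proof is correct, and it takes a genuinely different route from the paper's. The paper compares the data-driven process $T_1$ with a second, semi-oracle process $T_2$ (true-null factors with exponent $1$, alternative factors still data-driven), proves the event identity $\{\alpha^\dagger(q^*) < \min_{m\in\mathcal{M}_0(Q)}W_m\} = \{\alpha^\#(q^*) < \min_{m\in\mathcal{M}_0(Q)}W_m\}$, and then conditions on $\alpha^\#(q^*)$ — which is measurable with respect to the alternative components — before integrating the bound $\prod_{m\in\mathcal{M}_0(Q)}[1-\eta_m(\alpha^\#)] \ge T_2(\alpha^\#) \ge 1-q^*$. You instead push all the way to the fully deterministic Oracle threshold $\beta^*=\alpha^\dagger(q^*;Q)$ via the pathwise implication $\{A \ge W^0\} \Rightarrow \Psi_Q(W^0)\le q^* \Rightarrow W^0 \le \beta^*$, and then compute $\Pr_Q\{W^0\le\beta^*\}=\Psi_Q(\beta^*)$ exactly. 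This buys two things: the argument is shorter (no $T_2$, no conditioning, no measurability discussion of $\alpha^\#$), and it uses strictly less independence — only Condition (I) within $\mathcal{M}_0(Q)$, not the paper's additional assumption that the null block is independent of the alternative block, since $\beta^*$ is a constant rather than a statistic of the alternative components. It also makes the source of slack transparent: the terminal probability equals $q^*$ exactly, so all conservatism sits in the inclusion $E\subseteq\{A\ge W^0\}\subseteq\{W^0\le\beta^*\}$. What the paper's heavier machinery buys in exchange is reusability: the $T$-process/stopping-threshold formalism it sets up here is the template for the reverse-martingale argument in the FDR section. Your bookkeeping at the delicate points (right-continuity giving $\{\alpha:\delta_m^*(\eta_m(\alpha))=1\}=[W_m,1]$, the vanishing of $\delta_m^*(\eta_m(\alpha)-)$ for $\alpha<W_m$, and continuity of $\Psi_Q$ at $W^0$, including the edge case $W^0=0$) is all sound.
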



Next, we reexpress $\bolds{\delta}_S^*(q^*)$ in terms of the generalized
$p$-value statistic $\mathbf{W}$. This is achieved by
defining the random variable
\[
J^\dagger(q^*) 
= \max\Biggl\{j \in\mathcal{M}\dvtx\prod_{m=i}^M
\bigl[1 - \eta_{(m)}\bigl(W_{(i)}\bigr)\bigr] \ge1 - q^*, i=1,2,\ldots,j\Biggr\}.
\]
Since $\alpha^\dagger(q^*) \in[W_{(J^\dagger(q^*))},
W_{(J^\dagger(q^*)+1)})$, then
\[
\bolds{\delta}_S^*(q^*) = \bigl(
\delta_m^*\bigl(\eta_m\bigl(W_{(J^\dagger(q^*))}\bigr)\bigr), m \in\mathcal{M}
\bigr).
\]
The next result shows that the sequential step-down {\v{S}}id{\'a}k MDF,
which strongly controls FWER,
is a special case of $\bolds{\delta}_S^*(q^*)$ under exchangeability.
\begin{prop}
\label{prop-sequential Sidak}
If the $M$ ROC functions are identical, then $\bolds{\delta
}_S^*(q^*)$ coincides with
the sequential {\v{S}}id{\'a}k step-down FWER-controlling MDF.
\end{prop}


\section{\textit{Strong} FDR control}
\label{section-FDR procedures}

Assume the same framework as in Section \ref{section-strong FWER control}.
Our idea in obtaining an FDR-controlling MDF builds on the development
of the BH MDF, specifically the rationale of Theorem 2 in \cite{BenHoc95}.
Let $q^* \in[0,1]$ be the desired
FDR threshold and $Q$ be the underlying probability measure of $X$.
We introduce two stochastic processes: $\mathbf{T}_0 =
\{T_0(\alpha;Q)\dvtx \alpha\in[0,1]\}$ and
$\mathbf{T} = \{T(\alpha)\dvtx \alpha\in[0,1]\}$,
where
\[
T_0(\alpha;Q) = \sum_{m \in\mathcal{M}_0(Q)} \delta_m^*(\eta
_m(\alpha)) \quad\mbox{and}\quad
T(\alpha) = \sum_{m \in\mathcal{M}} \delta_m^*(\eta_m(\alpha)).
\]
For the MDF $\delta_W^*(\alpha)$, its FDR is
\[
R_1(\delta_W^*(\alpha),Q) =
E_Q\biggl\{\frac{T_0(\alpha;Q)}{T(\alpha)} I\{T(\alpha) > 0\}
\biggr\}.
\]
By the definition of the generalized $p$-value statistics $W_m$'s in
(\ref{generalized p-value}),
we have for $\alpha\in[W_{(m)},W_{(m+1)})$ that $T(\alpha) = m$, whereas
%
%
\begin{equation}
\label{bounding}
E_Q\{T_0(\alpha;Q)\} = \sum_{m \in\mathcal{M}} \bigl(1 - \theta_m(Q)\bigr)
\eta_m(\alpha)
\le\sum_{m \in\mathcal{M}} \eta_m(\alpha).
\end{equation}
Focus now on an $\alpha\in[W_{(m)},W_{(m+1)})$. If
$\sum_{j \in\mathcal{M}} \eta_j(W_{(m)}) \le m q^*$, then the best
$\alpha$ in this interval will be the largest value satisfying
$\sum_{j \in\mathcal{M}} \eta_j(\alpha) \le m q^*$, since by increasing
$\alpha$, the MDR decreases as argued in the development of $\delta_S^*(q^*)$
in Section \ref{section-strong FWER control}. This motivates our
definition of $\alpha^*(q^*) = \alpha^*(X,U;q^*)$ as the \textit{step-up}
estimator
%
%
\begin{equation}
\label{alpha-star}
\alpha^*(q^*) =
\sup\biggl\{\alpha\in[0,1]\dvtx \sum_{m \in\mathcal{M}} \eta
_m(\alpha)
\le q^* \sum_{m \in\mathcal{M}} \delta_m^*(\eta_m(\alpha))
\biggr\}.
\end{equation}
This induces a compound MDF $\bolds{\delta}_F^*(q^*) \equiv\bolds
{\delta}_F^*(X,U;q^*) \in
\mathcal{D}$ given by
%
%
\begin{equation}
\label{MDF FDR control}
\bolds{\delta}_F^*(q^*) = \bigl(\delta_m^*(\eta_m(\alpha^*(q^*))), m
\in\mathcal{M}\bigr).
\end{equation}
Theorem \ref{theorem-strong FDR control of procedure} establishes that
$\bolds{\delta}_F^*(q^*)$ does control the FDR at $q^*$.
Interestingly, the proof of this theorem, which can be found
in \cite{PenHabWu10Supp}, employs a reverse martingale argument.
\begin{theo}
\label{theorem-strong FDR control of procedure}
Let $q^* \in[0,1]$. If, $\forall Q \in\mathcal{Q}\setminus\{Q_0\}$
and $\forall\alpha\in(0,1)$,\break
%
$|\mathcal{M}_0(Q)| \max_{m \in\mathcal{M}_0(Q)} \eta_m(\alpha)
\le
\sum_{m \in\mathcal{M}} \eta_m(\alpha)$,
%
then $R_1(\bolds{\delta}_F^*(q^*),Q) \le q^*$ for $\forall Q \in
\mathcal{Q}$.
\end{theo}



Some remarks are in order regarding the condition in Theorem
\ref{theorem-strong FDR control of procedure}.
Clearly, the {\v{S}}id{\'a}k multiple decision size vector, which is the
optimal multiple decision size vector
when the ROC functions are identical, always satisfies this condition.
When not in this exchangeable setting, this condition
induces some control on the differences of the ROC functions.
The next proposition establishes that the BH procedure is
a special case of $\bolds{\delta}_F^*(q^*)$ under exchangeability.
\begin{prop}
\label{prop-BH procedure as special case}
If the ROC functions are identical, then $\bolds{\delta}_F^*(q^*)$
is the
FDR-$q^*$ controlling MDF in \cite{BenHoc95}.
\end{prop}


Examination of the proof of Proposition \ref{prop-BH procedure as
special case}
as presented in \cite{PenHabWu10Supp} shows
that the BH MDF $\bolds{\delta}^{\mathrm{BH}}(q^*)$
coincides with the {\v{S}}id{\'a}k-size based MDF $\bolds{\delta}^S(q^*)$.
The martingale proof for
Theorem \ref{theorem-strong FDR control of procedure}
thus carries over to establishing
FDR control by $\bolds{\delta}^{\mathrm{BH}}(q^*)$.
We mention that a martingale-based proof of FDR control
by $\bolds{\delta}^{\mathrm{BH}}(q^*)$ has also been presented in \cite{StoTaySie04}.
%

We also provide an alternative form of $\bolds{\delta}_F^*(q^*)$ in
terms of the
generalized $p$-value statistics $W_m$'s, a form analogous to
the conventional formulation of the BH procedure. Define
%
%
\begin{equation}
\label{FDR extension cut}
J^*(q^*) \equiv J^*(X,U;q^*) = \max\Biggl\{m \in\mathcal{M}\dvtx
\sum_{j \in\mathcal{M}} \eta_j\bigl(W_{(m)}\bigr) \le q^* m
\Biggr\}.
\end{equation}
Then, it is easy to see that $\bolds{\delta}_F^*(q^*)$ rejects
$H_{(m)0}$ for
$m \in\{1,2,\ldots,J^*(q^*)\}$ and accepts $H_{(m)0}$ for
$m \in\{J^*(q^*)+1,J^*(q^*)+2,\ldots,M\}$.

Finally, let us examine further the generalized $p$-value statistics $W_m$'s.
Focusing on $W_{(1)}$, under $Q_0$, we have that, for $a \in(0,1)$,
\[
\Prr_{Q_0}\bigl(W_{(1)} > a\bigr) = \Prr_{Q_0}\biggl\{\bigcap_{m \in
\mathcal{M}}[
\delta_m^*(\eta_m(a)) = 0]\biggr\} 
= \prod_{m \in\mathcal{M}} [1 - \eta_m(a)] = 1 - a,
\]
the second equality obtained by using the independence of the $\delta
_m^*$'s under $Q_0$.
Thus, $W_{(1)}$ is standard uniform when all null hypotheses
are correct. Using this uniformity result
and Lemma D.2 presented in \cite{PenHabWu10Supp} dealing with lower
and upper bounds
of $\eta_\bullet$ for $\eta\in UB(C_\alpha)$, we obtain in
Proposition \ref{prop-lower and upper bounds} presented below
a lower bound for $R_1(\delta_F^*(q^*),Q_0)$,
the FDR when all the null hypotheses are correct.
\begin{prop}
\label{prop-lower and upper bounds}
$\forall q^* \in[0,1]$, $1 - (1-q^*/M)^M \le R_1(\delta_F^*(q^*),Q_0)
\le q^*$.
\end{prop}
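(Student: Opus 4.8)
The upper bound $R_1(\delta_F^*(q^*),Q_0) \le q^*$ is already contained in Theorem \ref{theorem-strong FDR control of procedure} (the case $M_0 = M$), so only the lower bound requires work. The plan is to compute $R_1(\delta_F^*(q^*),Q_0)$ exactly enough to exhibit a bound from below. Recall from the remarks following the proof of Theorem \ref{theorem-strong FDR control of procedure} that under $Q_0$ we have $R_1(\delta_F^*(q^*),Q_0) = \Pr_{Q_0}(\alpha^*(q^*) > 0)$, since when all nulls are true, $T_0(\alpha) = T(\alpha)$ and so the false discovery proportion is $1$ whenever there is at least one discovery, i.e.\ whenever $\alpha^*(q^*) > 0$. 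So the task reduces to bounding $\Pr_{Q_0}\{\alpha^*(q^*) > 0\}$ from below.

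First I would observe that $\{\alpha^*(q^*) > 0\}$ certainly contains the event that the very first rejection already satisfies the step-up inequality, i.e.\ the event $\{W_{(1)} \le \text{(threshold for } m=1)\}$. More concretely, using the $W$-representation in (\ref{FDR extension cut}), $J^*(q^*) \ge 1$ holds as soon as $\sum_{j \in \mathcal{M}} \eta_j(W_{(1)}) \le q^*$. Since at $\alpha = W_{(1)}$ the vector $\eta(W_{(1)})$ lies on the upper boundary set $UB(C_{W_{(1)}})$ — because the optimal weak FWER size vector $\eta(\alpha)$ saturates the FWER constraint at level $\alpha$ — Lemma \ref{lemma-inequalities about eta bullet} gives $\sum_{j} \eta_j(W_{(1)}) \le M[1 - (1 - W_{(1)})^{1/M}]$. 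Hence $J^*(q^*) \ge 1$ whenever $M[1 - (1 - W_{(1)})^{1/M}] \le q^*$, equivalently whenever $W_{(1)} \le 1 - (1 - q^*/M)^M$. Therefore
\begin{displaymath}
R_1(\delta_F^*(q^*),Q_0) = \Pr_{Q_0}\{\alpha^*(q^*) > 0\}
\ge \Pr_{Q_0}\{J^*(q^*) \ge 1\}
\ge \Pr_{Q_0}\left\{W_{(1)} \le 1 - (1 - q^*/M)^M\right\}.
\end{displaymath}

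Finally I would invoke the uniformity fact established just before Lemma \ref{lemma-inequalities about eta bullet}: under $Q_0$, $W_{(1)}$ has a standard uniform distribution. Consequently the last probability equals exactly $1 - (1 - q^*/M)^M$, which completes the lower bound. I expect the only subtle point to be the clean justification that $J^*(q^*)\ge 1$ (equivalently $\alpha^*(q^*)>0$) really does follow from $W_{(1)}\le 1-(1-q^*/M)^M$ — one must check that monotonicity and left-continuity of $\alpha\mapsto\eta_m(\alpha)$ make the supremum in (\ref{alpha-star}) strictly positive on this event, rather than merely attaining the bound in a limit; this is where the caglad/nondecreasing structure of the processes used in Theorem \ref{theorem-strong FDR control of procedure} is doing the real work, and it should be stated explicitly. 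Everything else is a direct application of Lemma \ref{lemma-inequalities about eta bullet} and the uniformity of $W_{(1)}$.
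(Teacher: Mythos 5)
Your proposal is correct and follows essentially the same route as the paper: reduce to $\Pr_{Q_0}\{\alpha^*(q^*)>0\}$, lower-bound by the event that the first ordered generalized $p$-value already triggers a rejection, apply the bound $\eta_\bullet \le M[1-(1-\alpha)^{1/M}]$ from Lemma \ref{lemma-inequalities about eta bullet} (valid since the optimal size vector saturates the FWER constraint and hence lies in $UB(C_\alpha)$), and finish with the standard uniformity of $W_{(1)}$ under $Q_0$. The paper additionally records the weaker bound $1-\exp(-q^*)$ from the other component of that lemma before discarding it, but this is cosmetic.
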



\section{A modest simulation}
\label{section-Simulated Comparison}

We compared through computer simulations the performances of
$\bolds{\delta}_F^*$ and $\bolds{\delta}^{\mathrm{BH}}$ in terms of FDR
and MDR.
The simulation model utilized is similar to the Gaussian example
illustrating the optimal weak FWER-controlling
procedure in Section \ref{subsection-Concrete Examples}.
In this model, the observables are
$X_m \sim N(\mu_m,1)$ for each $m \in\mathcal{M}$,
which are independent of each other. The $m$th pair of hypotheses is
$H_{m0}\dvtx \mu_m \le0$ versus $H_{m1}\dvtx \mu_m > 0$. The UMP
size-$\eta
_m$ test is
$\delta_m^*(X_m;\eta_m) = I\{X_m > \Phi^{-1}(1-\eta_m)\}$. The true
values of the means
$\mu_m$'s are $\mu_m = \xi_m \theta_m, m \in\mathcal{M}$,
with $\theta_m \sim \operatorname{Ber}(p)$ and effect sizes $\xi_m \sim|N(\nu,1)|$,
again independently generated from each other. The parameter
combinations were induced by taking
$M \in\{20,50, 100\}$, $p \in\{0.1, 0.2, 0.4\}$
and $\nu\in\{1, 2, 4\}$. The FDR-threshold utilized were $q^* \in\{
0.05, 0.10\}$.
Since the computational implementation of $\bolds{\delta}_F^*$ takes time,
for each combination of $(q^*, M, \nu, p)$, we limited our simulations to
1,000 replications.
The simulated FDR and MDR$^*$ were the averages
of the false discovery proportions, $L_1(a,Q)$'s, and
the standardized missed discovery proportions,
$L_2(a,Q)/|\mathcal{M}_1(Q)|$, over the 1,000 replications.
We used this standardized MDR since, for each replicate,
a $Q$ is generated, hence $|\mathcal{M}_1(Q)|$ differs over the
replications. In essence, we are comparing the averages of
$R_2(\delta_F^*,Q)/|\mathcal{M}_1(Q)|$ and $R_2(\delta
^{\mathrm{BH}},Q)/|\mathcal{M}_1(Q)|$,
where the averaging is with respect to the mechanism generating the
$Q$'s over the
simulation replications.

We only report results for $q^* = 0.10$ in Table \ref{table1000.10}
since results for $q^* = 0.05$ lead to similar conclusions.
From this table, we
observe that both $\bolds{\delta}_F^*$ and
$\bolds{\delta}^{\mathrm{BH}}$ fulfill the FDR-constraint, and
in a conservative manner, which is expected from theory.\vspace*{1pt}
More importantly, the MDR-performance
of $\bolds{\delta}_F^*$ is better compared to that
of $\bolds{\delta}^{\mathrm{BH}}$, with this dominance holding
for all twenty-seven parameter combinations. Observe that as $M$ is increased
with $(\nu,p)$ remaining the same, there is an increase in their MDR$^*$'s;
whereas, when $\nu$ is increased, which increases the effect sizes,
their MDR$^*$'s decrease.
Interestingly, the impact of a change of value in $p$, the proportion
of true alternative hypotheses, did not necessarily translate into
a monotone change in their MDR$^*$'s, especially when $M = 20$, though
for the larger
$M$-values, the change in MDR$^*$ appears monotonically decreasing.


%
\begin{table}
\caption{Comparison of the false discovery rate (FDR)
and standardized missed discovery rate (MDR$^*$) performance of
MDFs $\delta_F^*$ and $\delta^{\mathrm{BH}}$ under a variety of simulation
parameters. This table is for $q^* = 0.10$. The FDR and MDR$^*$ are in
percentages. The number of replications is 1,000} \label{table1000.10}
\begin{tabular*}{\tablewidth}{@{\extracolsep{\fill}}d{2.0}cd{3.0}cccd{2.2}d{1.2}d{2.2}@{}}
\hline
& \multicolumn{1}{c}{$\bolds{q^*}$} & \multicolumn{1}{c}{$\bolds{M}$} & \multicolumn{1}{c}{$\bolds{\nu}$}
& \multicolumn{1}{c}{$\bolds{p}$} & \multicolumn{1}{c}{$\bolds{\delta_F^*}$\textbf{-FDR}}
& \multicolumn{1}{c}{$\bolds{\delta_F^*}$\textbf{-MDR}$\bolds{^*}$}
& \multicolumn{1}{c}{$\bolds{\delta^{\mathrm{BH}}}$\textbf{-FDR}}
& \multicolumn{1}{c@{}}{$\bolds{\delta^{\mathrm{BH}}}$\textbf{-MDR}$\bolds{^*}$} \\
\hline
1 & 0.1 & 20 & 1 & 0.1 & 8.03 & 70.80 & 8.43 & 72.64 \\
2 & 0.1 & 20 & 1 & 0.2 & 7.55 & 79.64 & 8.77 & 81.99 \\
3 & 0.1 & 20 & 1 & 0.4 & 6.05 & 77.47 & 6.65 & 80.30 \\
[3pt]
4 & 0.1 & 20 & 2 & 0.1 & 7.70 & 54.42 & 8.43 & 55.80 \\
5 & 0.1 & 20 & 2 & 0.2 & 7.39 & 56.32 & 7.59 & 57.31\\
6 & 0.1 & 20 & 2 & 0.4 & 6.47 & 47.82 & 6.21 & 49.38 \\
[3pt]
7 & 0.1 & 20 & 4 & 0.1 & 9.14 & 8.62 & 9.48 & 10.30 \\
8 & 0.1 & 20 & 4 & 0.2 & 7.80 & 7.34 & 6.97 & 9.20 \\
9 & 0.1 & 20 & 4 & 0.4 & 6.15 & 3.58 & 5.65 & 5.53 \\
[3pt]
10 & 0.1 & 50 & 1 & 0.1 & 8.83 & 84.87 & 9.26 & 87.05 \\
11 & 0.1 & 50 & 1 & 0.2 & 7.11 & 83.49 & 7.14 & 86.65 \\
12 & 0.1 & 50 & 1 & 0.4 & 6.45 & 78.91 & 6.42 & 82.30 \\
[3pt]
13 & 0.1 & 50 & 2 & 0.1 & 8.36 & 63.36 & 8.99 & 65.04 \\
14 & 0.1 & 50 & 2 & 0.2 & 8.74 & 57.30 & 8.73 & 58.93 \\
15 & 0.1 & 50 & 2 & 0.4 & 5.80 & 48.71 & 5.93 & 50.21 \\
[3pt]
16 & 0.1 & 50 & 4 & 0.1 & 8.84 & 10.28 & 8.93 & 12.09 \\
17 & 0.1 & 50 & 4 & 0.2 & 7.93 & 6.91 & 7.81 & 8.79 \\
18 & 0.1 & 50 & 4 & 0.4 & 6.34 & 3.40 & 6.07 & 5.68 \\
[3pt]
19 & 0.1 & 100 & 1 & 0.1 & 9.14 & 87.10 & 9.02 & 90.02 \\
20 & 0.1 & 100 & 1 & 0.2 & 8.21 & 84.05 & 8.78 & 87.38 \\
21 & 0.1 & 100 & 1 & 0.4 & 5.92 & 80.12 & 5.88 & 83.73 \\
[3pt]
22 & 0.1 & 100 & 2 & 0.1 & 9.79 & 66.10 & 9.24 & 67.93 \\
23 & 0.1 & 100 & 2 & 0.2 & 7.68 & 58.25 & 7.94 & 59.93 \\
24 & 0.1 & 100 & 2 & 0.4 & 5.74 & 49.29 & 6.10 & 50.90 \\
[3pt]
25 & 0.1 & 100 & 4 & 0.1 & 8.37 & 10.44 & 8.62 & 12.36 \\
26 & 0.1 & 100 & 4 & 0.2 & 7.72 & 5.93 & 7.81 & 8.22 \\
27 & 0.1 & 100 & 4 & 0.4 & 5.69 & 3.80 & 6.14 & 5.72 \\
\hline
\end{tabular*}
\end{table}

It may appear from this simulation study\vspace*{1pt}
that the standardized improvement of $\delta_F^*$ over $\delta^{\mathrm{BH}}$
is minuscule.
However, note that when translated
to overall number of discoveries, when $M$ is large, $\delta_F^*$ will
lead to many more
discoveries than $\delta^{\mathrm{BH}}$ while still maintaining desired FDR
control. Such an increase
in the number of discoveries may have important practical implications,
such as enlarging
the number of genes to be explored in consequent studies. This may translate
to enhanced chances of discovering crucial and important genes
without sacrificing the type I error rate.

\section{Summary and concluding remarks}
\label{section-Concluding Remarks}

This paper provides some resolution on the role of the
individual powers of test or decision functions, more appropriately their
ROC functions, in multiple hypotheses testing problems.
The importance and relevance of these
problems have arisen because of the
proliferation of high-dimensional ``large $M$, small $n$'' data sets
in the natural, medical, physical, economic and social sciences.
Such data sets are being created or generated due to advances in
high-throughput technology, the latter fueled
by speedy developments in computer technology and miniaturization.

Almost a century ago, Neyman
and Pearson demonstrated the need to take into account the power
function and the
alternative hypothesis configuration when seeking an optimal test
procedure in
single-pair hypothesis testing. Their work led to a divorce from the
then-existing
significance or $p$-value approach. Currently, many multiple hypotheses
testing procedures, epitomized by the {\v{S}}id{\'a}k procedures for
weak and strong FWER control
and by the Benjamini--Hochberg (BH) procedure for FDR control,
are based on the $p$-values of the individual tests
and do not consider differences in
the power traits of the individual tests.
They are appropriate in so-called exchangeable
settings wherein power characteristics of the individual tests are identical.
Such settings, however, are more the exception than the rule, since
nonidentical power characteristics easily arise
due to differences in the effect sizes, the dispersion parameters, or
the test functions that are employed.

This paper examined whether differences in power characteristics
of the individual tests could be exploited
to improve on existing procedures for FWER and FDR control.
Procedures were developed under the historically most fundamental scenario
where the null and the alternative hypotheses are simple.
First, an optimal MDF within the class of simple MDFs
was shown to exist for weak FWER control.
This MDF is better than
the {\v{S}}id{\'a}k weak FWER-controlling MDF, though the latter is
a special case of the optimal MDF under exchangeability.
Optimality also informs us of an optimal size-investing strategy.
Second, by using this optimal, though still restricted, MDF as
an anchor, a compound MDF strongly
controlling FWER was obtained. The sequential {\v{S}}id{\'a}k MDF is a
special case of this
MDF under exchangeability. Third, we developed
a compound MDF that controls FDR. The
BH procedure obtains from this MDF under exchangeability.
By construction, these new MDFs have smaller MDRs relative
to those that did not exploit power differences.
The improvement was demonstrated through a modest simulation study
by comparing the new FDR-controlling MDF and the BH MDF.

Though the proposed MDFs do improve on existing
ones, we could not claim that
they are optimal among \textit{all} compound MDFs for
strong FWER or FDR control. This question of global
optimality is a difficult and elusive one.
So far none of the existing compound MDFs,
such as the {estimated} ODP in \cite{Sto07},
could claim global optimality.
In our case, the possible drawback is
that in constructing the new MDFs,
we started with the class of simple MDFs. The resulting MDFs
are indeed compound, but establishing global optimality
is not transparent. A question even arise as to whether there
truly exists an optimal MDF among all
compound MDFs that, say, control FDR. One thing certain about
our MDFs is that they do control FWER or FDR.
This is in contrast to some MDFs that are obtained from
oracle MDFs via plugging-in of estimates for unknown quantities.
Even though the oracle MDF, which are unimplementable,
satisfies the type I error rate control,
the plug-in step will usually invalidate such control.
See \cite{SunCai07} where optimality was in an asymptotic
sense and with the type I error rate being the mFDR, as well as
\cite{FerFriRueThoKon08,RoqWie09} for more discussions on these issues.

A natural layer to add in the decision-theoretic formulation of
the problem is a Bayesian layer where a prior measure is specified
on the unknown probability measure $Q$ or, alternatively, on $\theta(Q)$.
There is a possibility that through this Bayesian approach, one may
be able to obtain a characterization of the class of optimal MDFs
controlling type I error rates, or when the two types of error
rates are combined, for example, via a weighted linear combination.
The papers \cite{MulParRobRou04,SarZhoGho08,Efr08,EfrAAS08} which
employ Bayes or
empirical Bayes approaches are highly relevant on this front.

Finally, we mention that there are still other aspects of the multiple
decision problem not dealt with in this paper. First is the
extension to situations with composite null and alternative hypotheses.
We indicated some ideas in
Section \ref{subsection-MLR Families} for distributional models
possessing the MLR property, but further and more extensive studies are needed.
Second are possible dependencies among the components in $(X_m, m \in
\mathcal{M}_0(Q))$.
We have assumed that this is an independent collection,
but it is certainly of theoretical and applied relevance to examine
dependent settings.
Potential results in such scenarios will extend those in
\cite{Sar98,BenYek01,SarAOS08}.
In these composite hypotheses and dependent data settings,
we expect that resampling-based ideas and approaches, such as those
in \cite{WesYou93,WesTro08}, will be central.

\section*{Acknowledgments}

The first author is grateful to Dr. James Berger for facilitating his
sabbatical leave visit at the Statistical and Applied Mathematical
Sciences Institute (SAMSI)
during Fall 2008 as this afforded him quality time for generating ideas
relevant to this project.
As such this work was partially supported by the National
Science Foundation (NSF) under Grant DMS-0635449 to SAMSI.
However, any opinions, findings and
conclusions or recommendations expressed in this paper are those of
the authors and do not necessarily reflect the views of the National
Science Foundation.
He is also grateful to Prof. Odd Aalen and Prof. Bo Lindqvist
for facilitating his visits to the University of Oslo and
the Norwegian University of Science and Technology (NTNU) which led to
critical ideas for this project.
The authors are highly grateful to the two reviewers, Associate Editor
and the Editors
for their comments, suggestions and criticisms.
Special thanks to Prof. Sanat Sarkar and Prof. Lan Wang
for a careful reading of an earlier version of the manuscript, and thank
the following for comments or for pointing out references:
Prof. J. Lynch, Dr. A. McLain, Prof. G. Rempala, Prof. J. Sethuraman,
Prof. G.~Taraldsen, Prof. A. Vidyashankar, Prof. L. Wasserman
and Prof. P. Westfall.
We also thank Dr. M. Pe\~na for discussions about microarrays.

\begin{supplement}
\stitle{Supplement to ``Power-Enhanced Multiple Decision Functions
Controlling~Family-Wise
Error and False Discovery Rates''}
\slink[doi,text={10.1214/10-AOS844SUPP}]{10.1214/10-AOS844SUPP}
\sdatatype{.pdf}
\sfilename{AoS844Supp.pdf}
\sdescription{The proofs of lemmas, propositions, theorems and
corollaries are provided in this
supplemental article \cite{PenHabWu10Supp}.}
\end{supplement}


%
\printaddresses

\end{document}